\pgfplotsset{compat=1.14} % for backward compatibility
\numberwithin{equation}{section}
\newtheorem {theorem}{Theorem}[section]
\newtheorem*{maintheorem}{Main Theorem}
\newtheorem {lemma}[theorem]{Lemma}
\newtheorem {corollary}[theorem]{Corollary}
\theoremstyle{definition}
\newtheorem {remark}[theorem]{Remark}
\theoremstyle{plain}
\newcommand{\mres}{\mathbin{\vrule height 1.6ex depth 0pt width
0.13ex\vrule height 0.13ex depth 0pt width 1.3ex}}
\newcommand{\dint}{\textup{d}}
\newcommand{\ind}[1]{ \mathds{1} \left\{ #1 \right\} }
\DeclarePairedDelimiterX{\norm}[1]{\lVert}{\rVert}{#1}
\newcommand{\sphere}{S}
\newcommand{\ball}{B}
\def\EE{\mathbb{E}}
\def\NN{\mathbb{N}}
\def\PP{\mathbb{P}}
\def\RR{\mathbb{R}}
\def\sfN{{\sf N}}
\def\cA{\mathcal{A}}
\def\cC{\mathcal{C}}
\def\cF{\mathcal{F}}
\newcommand\blfootnote[1]{%
  \begingroup
  \renewcommand\thefootnote{}\footnote{#1}%
  \addtocounter{footnote}{-1}%
  \endgroup
}
\title{\bf Weak convergence of the intersection point process of Poisson hyperplanes}
\author[1]{Anastas Baci}
\author[2]{Gilles Bonnet}
\author[3]{Christoph Thäle}
\affil[1,2,3]{\small{\textit{Faculty of Mathematics, Ruhr University Bochum, Germany}}}
\date{}
\begin{document}
%%%%%%%%%%%%%%%%%%%%%%%%%%%%%%%%%%%%%%%
%%%%%%%%%%%%%%%%%%%%%%%%%%%%%%%%%%%%%%%

\maketitle

\begin{abstract}
\noindent  This paper deals with the intersection point process of a stationary and isotropic Poisson hyperplane process in $\mathbb{R}^d$ of intensity $t>0$, where only hyperplanes that intersect a centred ball of radius $R>0$ are considered. Taking $R=t^{-\frac{d}{d+1}}$ it is shown that this point process converges in distribution, as $t\to\infty$, to a Poisson point process on $\mathbb{R}^d\setminus\{0\}$ whose intensity measure has power-law density proportional to $\|x\|^{-(d+1)}$ with respect to the Lebesgue measure. A bound on the speed of convergence in terms of the Kantorovich-Rubinstein distance is provided as well. In the background is a general functional Poisson approximation theorem on abstract Poisson spaces. Implications on the weak convergence of the convex hull of the intersection point process and the convergence of its $f$-vector are also discussed, disproving and correcting thereby a conjecture of Devroye and Toussaint [J.\ Algorithms 14.3 (1993), 381--394] in computational geometry.
\bigskip
\\
{\bf Keywords}. {Convex hull, integral geometry, intersection point process, Poisson hyperplane process, Poisson point process approximation, rate of convergence, weak convergence.}
\smallskip
\\
{\bf MSC}. Primary  60D05, 60F05; Secondary 52A22, 53C65.
\end{abstract}

%\tableofcontents

\blfootnote{\textsuperscript{1}E-mail address: \texttt {anastas.baci@rub.de}}
\blfootnote{\textsuperscript{2}E-mail address: \texttt{gilles.bonnet@rub.de}}
\blfootnote{\textsuperscript{3}E-mail address: \texttt{christoph.thaele@rub.de}}

%%%%%%%%%%%%%%%%%%%%%%%%%%%%%%%%%%%%%%%
\section{Introduction}
%%%%%%%%%%%%%%%%%%%%%%%%%%%%%%%%%%%%%%%

The mathematical analysis of Poisson point processes of hyperplanes in $\RR^d$ ($d\geq 2$) and the resulting random tessellations has a long tradition in stochastic geometry, see, for example, \cite{matheron1975random,schneider2008stochastic,stoyan1995stochastic} and the many references given therein. A large number of mean value formulas and relations are known explicitly and in the last decade also a comprehensive second-order and central limit theory for a variety of geometric functionals associated with such Poisson hyperplane tessellations has been developed, see \cite{last2014moments,HeinrichCLT09,heinrich2007limit,HeinrichSchmidtzSchmidt,ReitznerSchulte} and also Remark \ref{remZeroTyp} below.
 
The problem we are dealing with in this paper is motivated by an observation known in the area of computational geometry: \textit{An arrangement of $n$ lines chosen at random from $\RR^2$ has a vertex set whose convex hull has an absolutely bounded expected vertex number}, see \cite{berend2005convex,devroye1993convex,golin2003convex}.
We refer the reader to the introduction of \cite{berend2005convex} for an explanation on how this fact improves the average algorithmic complexity for the computation of the convex hull of such vertex sets.
In \cite{devroye1993convex} it is conjectured that the expected vertex number tends to $4$, as the number of random lines goes to infinity. Our results can be considered as a variation and extension of this theme in a number of different directions. To describe them, we start by considering for an arbitrary space dimension $d\geq 2$ an arrangement of random hyperplanes in $\RR^d$ driven by a stationary and isotropic Poisson point process on the space of hyperplanes having intensity $t>0$. This gives rise to an infinite collection of random hyperplanes and the convex hull of the associated random set of intersection points almost surely coincides with $\RR^d$. Therefore, in a next step, we apply a thinning procedure to the Poisson process and we only keep those hyperplanes which intersect a ball of radius $R>0$ centred at the origin. This leaves us with an almost surely finite collection of random hyperplanes in $\RR^d$ in general position. In particular, any $d$-tuple of the remaining distinct hyperplanes intersects in a randomly located point in $\RR^d$. By $\Xi_{t,R}$ we denote the random point process of all such intersection points, see Figure~\ref{fig:simulation} % and Figure~\ref{fig:simulation2} 
for illustrations and simulations. The convex hull of $\Xi_{t,R}$ is a random polytope in $\RR^d$ whose description is in the focus of our attention and whose analysis is motivated by \cite{berend2005convex,devroye1993convex,golin2003convex,atallah1986computing,ching1985finding}. Our main result is a quantitative limit theorem for the point process $\Xi_{t,R}$ itself, as $t\to\infty$ and where $R=R(t)$ is a suitable function of $t$. As it turns out, if we choose $R=t^{-\frac{d}{d+1}}$ then $\Xi_{t,R}$ converges in distribution to a non-trivial limiting point process, namely a Poisson point process $\zeta$ in $\RR^d\setminus\{0\}$ whose intensity measure has the power-law density $x\mapsto C_d\|x\|^{-(d+1)}$ with respect to the Lebesgue measure and where $C_d$ is a constant only depending on the dimension $d$ (for example, $C_2=\frac{4}{3\pi^2}$). Formally, our main result reads as follows.

\begin{maintheorem}[Theorem \ref{Poissonapproximation}]\label{thm:intro}
    Let $\Xi_{t,R}$ be the intersection point process and $\zeta$ be the Poisson point process as introduced above. Then, taking $R=t^{-\frac{d}{d+1}}$, we have that $\Xi_{t,R}$ converges to $\zeta$ in distribution (on the space of point process in $\RR^d\setminus\{0\}$ supplied with the vague topology), as $t\to\infty$.
\end{maintheorem}

As a particular feature, we will derive an upper bound on the speed of convergence in this limit theorem, which is measured in terms of the famous Kantorovich-Rubinstein optimal transportation distance. Moreover, this convergence together with a continuous-mapping-type result implies that the convex hull of $\Xi_{t,R}$ converges in distribution to the convex hull of $\zeta$ on the space of convex bodies in $\RR^d$ supplied with the Hausdorff distance. We will also conclude from this the convergence in distribution of the number of $k$-dimensional faces for any $k\in\{0,1,\ldots,d-1\}$. We argue that in dimension $d=2$, for example, the expected number of vertices cannot converge to a constant smaller than $\pi^2/2\approx 4.93$, disproving and correcting thereby the aforementioned conjecture from \cite{devroye1993convex}. 

We would like to emphasize that Poisson point processes in $\RR^d$ with a power-law density function, such as the one appearing in the definition of $\zeta$, and their convex hulls have recently been intensively investigated and are closely connected with (asymptotic) geometric and probabilistic description of several well known objects in stochastic geometry. As concrete examples we mention the Poisson zero polytope, the typical Poisson-Voronoi cell, random convex hulls of independent random points in convex bodies with smooth boundary, spherical random polytopes on half-spheres or Poisson-Voronoi tessellations in spherical spaces. Our paper adds another item to this list and underlines once again the outstanding role in stochastic geometry of Poisson point processes with a power-law density function and their convex hulls, see \cite{kabluchko2019cones,kabluchko2018beta,kabluchko2019angles,kabluchko2019expected,kabluchko2019typical}.

In order to prove the results outlined above, we have to bring together a number of different technical tools and devices. To show the convergence of the intersection point process $\Xi_{t,R}$ to the Poisson point process $\zeta$ we make use of an abstract functional Poisson limit theorem which has been developed in \cite{decreusefond2016functional} in the framework of the Malliavin-Stein technique on abstract Poisson spaces. It is this device, which delivers the rate of convergence measured in the Kantorovich-Rubinstein for point processes. Beside the convergence of the intensity measures of the involved point processes, this approach also requires control over a number of auxiliary integral expressions. Their analysis constitutes the most technical part of this paper and requires an extensive use of integral-geometric transformation formulas of Blaschke-Petkantschin type in combination with asymptotic expansions of integrals having a geometric flavour. This also results in a higher-dimensional asymptotic version of Huygen's principle

\medspace

\begin{figure}
    \centering
    \begin{tikzpicture}
        \def\mybottom{-2.87};
        \def\mytop{3.42};
        \def\myleft{-2.92};
        \def\myright{3.38};
        \def\mycenterx{0.224}
        \def\mycentery{0.268}
        \def\myhalfside{0.36}
        \def\mywidth{0.467\textwidth}
        \def\myinnerbottom{\mycentery-\myhalfside};
        \def\myinnertop{\mycentery+\myhalfside};
        \def\myinnerleft{\mycenterx-\myhalfside};
        \def\myinnerright{\mycenterx+\myhalfside};
        \begin{scope}[shift={(0,0)}]
            \node[inner sep=0pt] (thousand) at (0,0)
            {\includegraphics[trim= 135 60 125 75, clip, width=\mywidth]{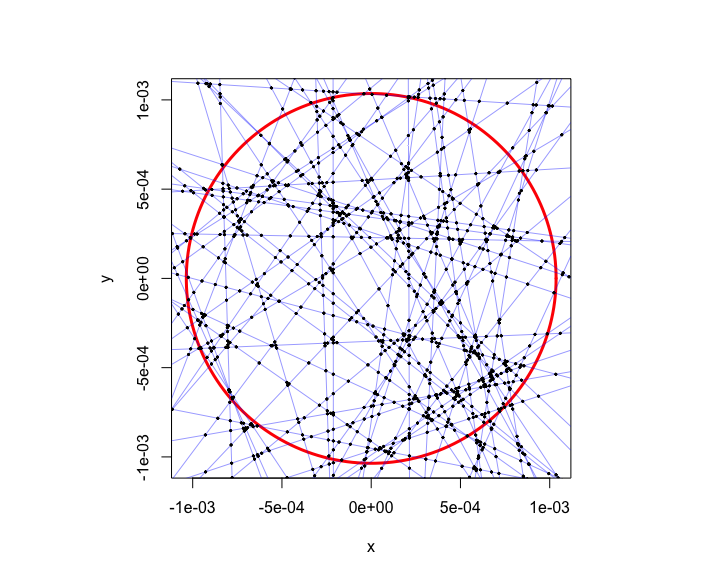}};
            \node[inner sep=0pt] (toprightthausend) at (\myright,\mytop) {};
            \node[inner sep=0pt] (bottomrightthausend) at (\myright,\mybottom) {};
        \end{scope}
        \begin{scope}[shift={(8,0)}]
            \node[inner sep=0pt] (hundred) at (0,0)
            {\includegraphics[trim= 135 60 125 75, clip, width=\mywidth]{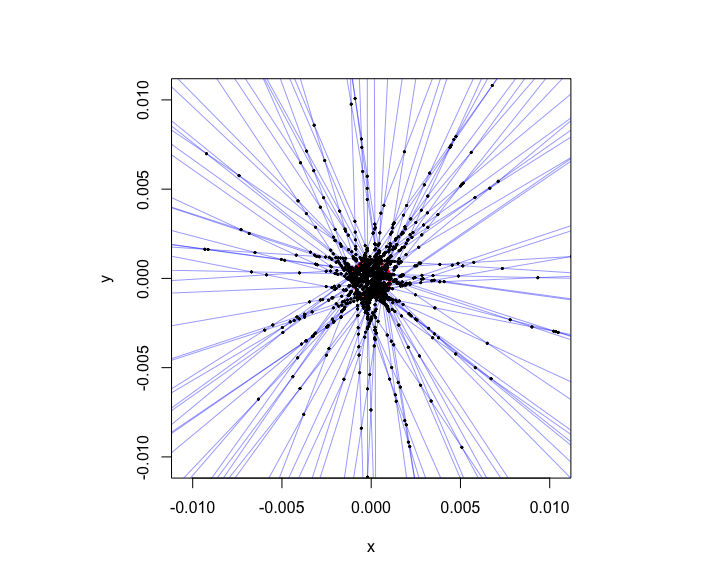}};
            \draw[red,thick] (\myinnerleft,\myinnerbottom) rectangle (\myinnerright,\myinnertop);
            \draw[thick,red] (\myinnerright,\myinnertop) -- (toprightthausend);
            \draw[thick,red] (\myinnerright,\myinnerbottom) -- (bottomrightthausend);
            \node[inner sep=0pt] (bottomleftthundred) at (\myleft,\mybottom) {};
            \node[inner sep=0pt] (bottomrighthundred) at (\myright,\mybottom) {};
        \end{scope}
        \begin{scope}[shift={(8,-8)}]
            \node[inner sep=0pt] (ten) at (0,0)
            {\includegraphics[trim= 135 60 125 75, clip, width=\mywidth]{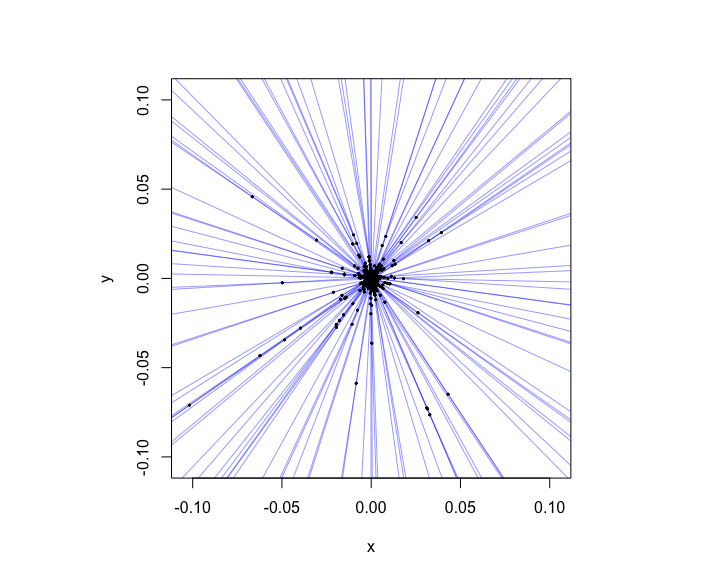}};
            \draw[red,thick] (\myinnerleft,\myinnerbottom) rectangle (\myinnerright,\myinnertop);
            \draw[thick,red] (\myinnerleft,\myinnerbottom) -- (bottomleftthundred);
            \draw[thick,red] (\myinnerright,\myinnerbottom) -- (bottomrighthundred);
            \node[inner sep=0pt] (bottomleftten) at (\myleft,\mybottom) {};
            \node[inner sep=0pt] (topleftten) at (\myleft,\mytop) {};
        \end{scope}
        \begin{scope}[shift={(0,-8)}]
            \node[inner sep=0pt] (one) at (0,0)
            {\includegraphics[trim= 135 60 125 75, clip, width=\mywidth]{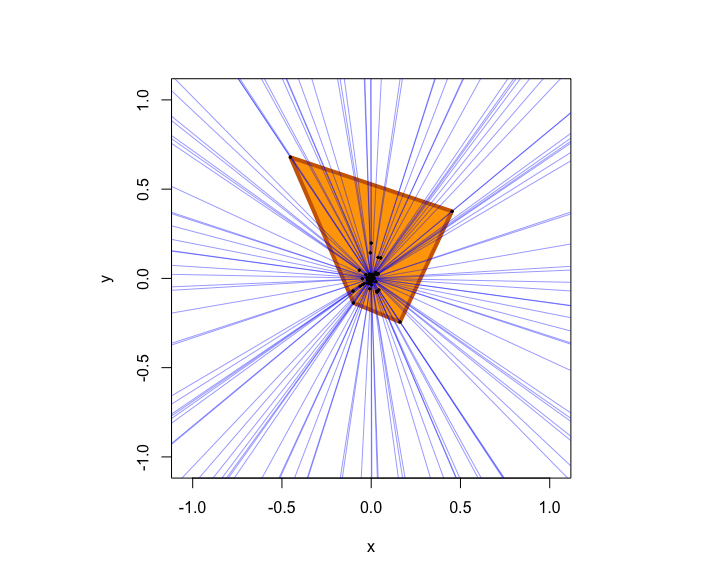}};
            \draw[red,thick] (\myinnerleft,\myinnerbottom) rectangle (\myinnerright,\myinnertop);
            \draw[thick,red] (\myinnerleft,\myinnertop) -- (topleftten);
            \draw[thick,red] (\myinnerleft,\myinnerbottom) -- (bottomleftten);
        \end{scope}
    \end{tikzpicture}
    \caption{One single realisation, visualised at four different scales, of the restricted Poisson hyperplane process in $\RR^2$ (blue lines) of intensity $t=30\,000$, the ball of radius $R=t^{-2/3}\approx 0.001$ (red circle), the intersection point process $\Xi_{t,R}$ (black points) and its convex hull (orange polygon).}
    \label{fig:simulation}
\end{figure}

%\begin{figure}[t]
%    \centering
    %\includegraphics[width=0.6\textwidth]{interpts.png}
%    \includegraphics[width=0.49\textwidth]{IP200ZoomIn.png}
%    \hfill
%    \includegraphics[width=0.49\textwidth]{IP200ZoomOut.png}
%    \caption{Simulation of the restricted Poisson hyperplane process in $\RR^2$ (blue lines) with $t=200$, the ball of radius $R=t^{-2/3}\simeq 0.029$ (red circle) and the intersection point process $\Xi_{t,R}$ (black points). % restricted to a window.
%    Both graphics represent the same realisation of the process viewed at different scales.}
%    \label{fig:simulation}
%\end{figure}

The remaining parts of this paper are structured as follows. In Section \ref{sec:Prelim} we recall some preliminaries, mainly related to Grassmannians and integral geometry. We also formally introduce there the Poisson hyperplane process and its intersection point process, which are the main objects we deal with in this paper. Section \ref{sec:preplemmas} contains a number of preparatory lemmas, which are of more technical nature, but which are essential in the proof of our main result. The convergence of the intensity measure of the intersection point process is derived in Section \ref{sec:ConvIntensityMeasure} on the basis of the material developed in the previous section. In Section \ref{sec:ConvPPandConv} we prove our quantitative limit theorem for the intersection point process and discuss implications to the convergence of its convex hull. Also, we elaborate there on the implication of our results to the conjecture of Devroye and Toussaint from \cite{devroye1993convex}.

%%%%%%%%%%%%%%%%%%%%%%%%%%%%%%%%%%%%%%%
\section{Notation and preliminaries}\label{sec:Prelim}
%%%%%%%%%%%%%%%%%%%%%%%%%%%%%%%%%%%%%%%

\subsection{General notation, linear and affine Grassmanians}

In this article we work in the $d$-dimensional Euclidean space $\RR^d$, $d\geq 2$.
It is equipped with the usual topology and Lebesgue measure. We consider the standard scalar product $\langle\,\cdot\,,\,\cdot\,\rangle$ and norm $\norm{\,\cdot\,}$, and denote by $u^{\perp}$ and $E^\perp$ the orthogonal spaces of a vector $u\in\RR^d$ and a flat $E\subset\RR^d$.

The unit ball and sphere (centred at the origin) are denoted by $\ball^d:=\{x\in\RR^d\mid\|x\|\leq 1\}$ and $\sphere^{d-1}:=\{x\in\RR^d\mid\|x\|=1\}$, respectively. A ball centred at the origin and of radius $r>0$ is denoted by $\ball_r^d$. The (Lebesgue) volume of $\ball^d$ and surface area of $\sphere^{d-1}$ are given by 
$$
\kappa_d:=\frac{\pi^{d/2}}{\Gamma(1+\frac{d}{2})}\qquad\text{and}\qquad \omega_d:=d\kappa_d=\frac{2\pi^{d/2}}{\Gamma(\frac{d}{2})}.
$$
The spherical Lebesgue measure on $\sphere^{d-1}$ is denoted by $\sigma$.
We use the convention to add the dimension of the sphere as an index when we consider the spherical Lebesgue measure on a lower dimensional unit sphere. For example we denote by $\sigma_{k-1}$ the spherical Lebesgue measure on the $(k-1)$-dimensional sphere $\sphere^{d-1}\cap F$, where $F$ is a $k$-dimensional linear subspace. We use the notation $\mu\mres A$ to indicate the restriction of a measure $\mu$ to a (measurable) set $A$.

For $k\in\{0,1,\ldots,d\}$, the linear (resp.\ affine) Grassmanian $G(d,k)$ (resp.\ $A(d,k)$) is the space of all $k$-dimensional linear (resp.\ affine) subspaces of $\RR^d$. These spaces are endowed with their usual topologies and Haar measures $\nu_k$ (resp.\ $\mu_k$), normalised as in \cite{schneider2008stochastic}.
Let $0 \leq k \leq l \leq m \leq d $ be integers, $E_o\in G(d,l)$ and $E\in A(d,l)$.
We introduce the following notation for spaces of linear or affine spaces which contain or are contained in $E_o$ or $E$:
\[ G(E_o,k) := \{ F_o \in G(d,k) \mid F_o \subset E_o \} , \]
\[ G(E_o,m) := \{ F_o \in G(d,m) \mid F_o \supset E_o \} , \]
\[ A(E,k) := \{ F \in A(d,k) \mid F \subset E \} , \]
\[ A(E,m) := \{ F \in A(d,m) \mid F \supset E \} . \]
These spaces are again equipped with the usual topologies and relative Haar measures $\nu_k^{E_o}$, $\nu_m^{E_o}$, $\mu_k^{E}$ and $\mu_m^{E}$, see \cite[Chapter 7.1]{schneider2008stochastic}.

For real-valued functions $f$ and $g$, we use the standard Landau notation $f(t) = O(g(t))$ which means that there exists a positive constant $C$ such that $\lvert f(t)\rvert \leq C \lvert g(t) \rvert $ for all $t$. 

The constants $C$ in this paper might depend on the space dimension $d$, but on nothing else. Their value may change from occurrence to occurrence.

\subsection{An integral-geometric transformation formula}

In the present article we often have to deal with multivariate integrals over the Grassmannians or relative Grassmannians we introduced in the previous section. Such integrals can usually be evaluated or simplified by applying suitable integral-geometric transformation formulas, which can be found in  \cite{schneider2008stochastic}, for example.

A special case of the affine Blaschke-Petkantschin formula \cite[Theorem 7.2.8]{schneider2008stochastic} says that for any $\ell\in\{1,\ldots,d\}$ and any non-negative measurable function $f\colon A(d,d-1)^{\ell} \to \RR$, we have
\begin{equation} \label{eq:B-P}
\begin{split}
    & \int_{A(d,d-1)^\ell} f(H_1,\ldots,H_\ell) \mu_{d-1}^{\otimes \ell} (\dint (H_1,\ldots,H_\ell))
    \\ & \qquad  = c_{d,\ell} \int_{A(d,d-\ell)} \int_{A(E,d-1)^\ell} f(H_1,\ldots,H_\ell) [H_1,\ldots,H_\ell]^{d-\ell+1} \\& \hspace{6cm} \times (\mu_{d-1}^{E})^{\otimes \ell} (\dint (H_1,\ldots,H_\ell)) \mu_{d-\ell}(\dint E) ,
\end{split}    
\end{equation}
where
\[
c_{d,\ell} := \frac{\omega_{d-\ell+1} \cdots \omega_d}{\omega_1 \cdots \omega_\ell} \Bigl( \frac{\omega_\ell}{\omega_d} \Bigr)^{\ell}
\]
is a constant depending only on $d$ and $\ell$, and $[H_1,\ldots,H_\ell]$ is the so-called \textit{subspace de\-ter\-mi\-nant}, which can be defined as the $\ell$-dimensional Lebesgue volume of the parallelepiped spanned by $u_1,\ldots,u_\ell$, where for each $i\in\{1,\ldots,\ell\}$, $u_i$ is one of the two unit normal vectors of $H_i$ (arbitrarily chosen).

As the definition of the subspace determinant above suggests, it is sometime more convenient to work with the normal vectors of hyperplances rather than the hyperplanes themselves.
In this spirit, the following formula is quite useful.
For any non-negative measurable function $f\colon G(d,d-1) \to \RR$, 
\begin{equation}\label{eq:July13a}
\begin{split}
    \int_{G(d,d-1)} f(H) \nu_{d-1}(\dint H)
    & = \frac{1}{\omega_d}\int_{\sphere^{d-1}} f(u^{\perp}) \sigma (\dint u) .
\end{split}    
\end{equation}

%\begin{figure}[t]
%    \centering
%    \includegraphics[width=0.3\textwidth]{ch5.png}\quad
%    \includegraphics[width=0.3\textwidth]{ch20.png}\quad
%    \includegraphics[width=0.3\textwidth]{ch50.png}
%    \caption{Simulations of convex hulls of the intersection point process $\Xi_{t,R}$ for $t=3500$ (left panel), $t=250\,000$ (middle panel) and $t=15\,000\,000$ (right panel).}
%    \label{fig:simulation2}
%\end{figure}

\subsection{Poisson hyperplane processes and derived point processes}

We denote by $\eta_t$, $t>0$, a stationary and isotropic Poisson hyperplane process of intensity $t$, i.e., a Poisson point process on $A(d,d-1)$ with intensity measure $t\mu_{d-1}$, see \cite{last2018lectures,schneider2008stochastic} for formal definitions.
Furthermore, we denote by 
$\eta_{t,R}$ the restriction of $\eta_{t}$ to the set of hyperplanes that intersect the ball $\ball_{R}^{d}$ of radius $R>0$.
Almost surely the hyperplanes of this process are in general position, meaning that the intersection of any $k$ of them has codimension $k$, for any $k\in\{1,\ldots,d\}$. In this paper, and in particular in the next definition, we assume that all realisations of the underlying hyperplane process satisfy this condition. We are interested in the intersection point process $\Xi_{t,R}$ induced by $\eta_{t,R}$, which is defined by
\begin{align} \label{intersection process1}
    \Xi_{t,R}(\,\cdot\,)
    := \frac{1}{d!}\sum_{(H_1,\ldots,H_d)\in \eta_{t,R,\neq}^{d}} \ind{H_1\cap \ldots \cap H_d  \in\, \cdot\, } ,
\end{align}
where $\eta_{t,R,\neq}^{d}$ is the set of all $d$-tuple of $d$ distinct hyperplanes of $\eta_{t,R}$.
%(as usual, we identify simple point processes with their supports) 
Note that almost surely the intersections in \eqref{intersection process1} are single points.
Applying the multivariate Mecke formula  \cite[Theorem 4.4]{last2018lectures} we see that the intensity measure of the intersection point process $\Xi_{t,R}$ is
\begin{equation} \label{Mecke formula}
    L_{t,R}(\,\cdot\,)
    = \frac{t^d}{d!}\int_{[\ball_{R}^{d}]^d}\ind{H_1\cap\ldots \cap H_d \in \cdot } \mu_{d-1}^{\otimes d} (\dint (H_1,\ldots,H_d)),
\end{equation}
where
\begin{equation*}
    [\ball_{R}^{d}]
    := \{ H \in A(d,d-1) \mid H\cap \ball_R^d \neq \emptyset \} 
\end{equation*}
is a shorthand notation for the set of hyperplanes intersecting the ball $\ball_R^d$.

\section{Preparatory lemmas}\label{sec:preplemmas}
%%%%%%%%%%%%%%%%%%%%%%%%%%%%%%%%%%%%%%%

\subsection{Intersection of a fixed affine flat with a random linear subspace}

The Cauchy distribution on $\RR$ with density 
$$
x\mapsto \frac{1}{\pi \gamma}\,\frac{1}{1+\big(\frac{x}{\gamma}\big)^2},\qquad x\in\RR,
$$
is known to arise as the distribution of the intersection point in the plane between the $x$-axis and a random line passing through the point with coordinates $(0,\gamma)$ and with direction uniformly distributed (Huygen's principle).
In this paper some computations involve a very similar quantity, namely the norm of the intersection point between a random $k$-dimensional linear space and a fixed affine flat of complementary dimension $d-k$.
The next lemma provides its distribution.
In this lemma we represent random linear subspaces (resp.\ flats) as fixed linear subspaces (resp.\ flats) on which we apply a random rotation.
\begin{lemma} \label{lem:intersection}
    Let $k\in\{1,\ldots,d-1\}$, $E_o\in G(d,k)$ and $F\in A(d,d-k)$ be fixed.
    Denote by $s_F=\mathrm{dist}(0,F)$ the distance between the origin and the flat $F$.
    Let $\theta \in SO_d$ be a uniformly distributed random rotation, i.e., a rotation distributed according to the Haar probability measure on the compact topological group $SO_d$.
    Then, denoting by $\overset{d}{=}$ equality in distribution, we have that
    \begin{equation*}
        \norm{E_o\cap \theta F} \overset{d}{=} \norm{\theta E_o \cap F},
    \end{equation*}
    and the density of both random variables is given by
    \begin{equation} \label{density-intersection}
        \ind{r\geq s_F} \frac{2\omega_{d-k}}{\omega_{d-k+1}} \Bigl( 1-\frac{s_F^2}{r^2} \Bigr)^{\frac{d-k-2}{2}} \frac{s_F}{r^2},\qquad r>0.
    \end{equation}
    In particular, for $r>0$,
    \begin{equation} \label{bigO-intersection}
        \PP(\norm{E_o\cap \theta F}\geq r)
        = \PP(\norm{\theta E_o \cap F}\geq r)
        = O\Bigl(\min\bigl(1,\frac{s_F}{r}\bigr)\Bigr).
    \end{equation}
\end{lemma}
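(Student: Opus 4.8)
\emph{Proof strategy.}
I would begin with the distributional equality, which follows immediately from the invariance of the Haar probability measure on $SO_d$: since $E_o\cap\theta F=\theta(\theta^{-1}E_o\cap F)$ and rotations are isometries, $\norm{E_o\cap\theta F}=\norm{\theta^{-1}E_o\cap F}$, while $\theta^{-1}\overset{d}{=}\theta$. It therefore suffices to identify the law of $R:=\norm{\theta E_o\cap F}$. One may assume $s_F>0$ (if $s_F=0$ then $F$ is a linear subspace and $\theta E_o\cap F=\{0\}$ almost surely, so \eqref{density-intersection}--\eqref{bigO-intersection} are trivial), and after applying a fixed rotation one may take $F=s_Fp+L$, where $L\in G(d,d-k)$ is the linear direction space of $F$ and $p\in L^{\perp}\cap\sphere^{d-1}$. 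The crucial auxiliary object is the fixed linear subspace $V:=L+\RR p$ of dimension $d-k+1$, which contains $F$ and decomposes orthogonally as $V=L\oplus\RR p$.

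The next step is a closed formula for $R$. Set $M:=\theta E_o$. Since $\dim M+\dim V=d+1$, almost surely $\ell:=M\cap V$ is a line through the origin; fix a unit vector $w$ with $\ell=\RR w$. As $F\subset V$, we have $M\cap F=\ell\cap F$, and with the orthogonal decomposition $w=w_L+\langle w,p\rangle p$, $w_L\in L$, a point $sw$ lies in $F=s_Fp+L$ if and only if $s\langle w,p\rangle=s_F$. Almost surely $\langle w,p\rangle\neq 0$, whence $M\cap F$ reduces to the single point $\frac{s_F}{\langle w,p\rangle}w$ and
\begin{equation*}
    R=\frac{s_F}{\lvert\langle w,p\rangle\rvert}.
\end{equation*}

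The core of the argument is the law of $\ell$, equivalently of $\lvert\langle w,p\rangle\rvert$, which depends on $\ell$ alone. Since $\rho M\overset{d}{=}M$ for all $\rho\in SO_d$, every rotation $\rho\in SO_d$ with $\rho V=V$ satisfies $\rho\ell=\rho M\cap V\overset{d}{=}M\cap V=\ell$; as these rotations act on $G(V,1)$ as the full rotation group of $V$, the line $\ell$ is uniformly distributed there, i.e.\ $\ell\overset{d}{=}\RR U$ with $U$ uniform on $\sphere^{d-1}\cap V$, a sphere of dimension $d-k$. Hence $\lvert\langle w,p\rangle\rvert\overset{d}{=}\lvert\langle U,p\rangle\rvert$, and a standard slicing computation (spherical coordinates on $\sphere^{d-1}\cap V$ about the axis $\RR p$) shows that $\lvert\langle U,p\rangle\rvert$ has density $c\mapsto\frac{2\omega_{d-k}}{\omega_{d-k+1}}(1-c^2)^{(d-k-2)/2}$ on $(0,1)$. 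Pushing this density forward through the decreasing map $c\mapsto s_F/c$ produces exactly \eqref{density-intersection} (which is then automatically a probability density, being the image of one under a change of variables).

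Finally, for \eqref{bigO-intersection}: since $\lvert\langle U,p\rangle\rvert\le 1$ we have $\PP(R\ge r)=1$ for $r\le s_F$, and for $r>s_F$,
\begin{equation*}
    \PP(R\ge r)=\PP\Bigl(\lvert\langle U,p\rangle\rvert\le\tfrac{s_F}{r}\Bigr)=\frac{2\omega_{d-k}}{\omega_{d-k+1}}\int_0^{s_F/r}(1-c^2)^{\frac{d-k-2}{2}}\,\dint c\le C\,\frac{s_F}{r},
\end{equation*}
using $(1-c^2)^{(d-k-2)/2}\le 1$ if $d-k\ge 2$ and $\int_0^a(1-c^2)^{-1/2}\,\dint c=\arcsin a\le\frac{\pi}{2}a$ if $d-k=1$; together with $\PP(R\ge r)\le 1$ this gives $\PP(R\ge r)=O(\min(1,s_F/r))$. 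I expect the one genuinely non-formal point to be the reduction just used: recognising that $M\cap F$ is governed entirely by the uniformly random line $M\cap V$ lying in the \emph{fixed} $(d-k+1)$-dimensional subspace $V$, which collapses everything to a one-dimensional integral. The remainder is bookkeeping with dimensions, and the degenerate configurations ($M$ not transverse to $V$, or $\langle w,p\rangle=0$) form a null set and are discarded throughout.
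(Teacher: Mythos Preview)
Your proof is correct and follows essentially the same route as the paper's: reduce to the random line $\theta E_o\cap V$ inside the fixed $(d-k+1)$-dimensional span $V=\mathrm{span}(F)$ (the paper's $\widetilde{F}$), observe that this line is Haar-distributed in $G(V,1)$, compute $\norm{\ell\cap F}=s_F/\lvert\langle w,p\rangle\rvert$, and finish with the slice integration formula on $\sphere^{d-1}\cap V$. One minor point in your favour: for the tail bound you treat the case $d-k=1$ separately via $\arcsin$, whereas the paper's phrase ``bounding $1-y^2$ by $1$'' is literally only valid when the exponent $(d-k-2)/2$ is non-negative.
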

\begin{proof}
    The first statement is trivial since 
    \[ \norm{E_o\cap \theta F} 
    = \norm{\theta^{-1} E_o \cap F} \]
    and $\theta^{-1}$ has the same distribution as $\theta$.
    %The last statement follow from the second one by integration.
    %Thus we only have to compute the density of the random variable $\norm{\theta E_o \cap F}$.
    
    Let us denote by $\widetilde{F} := \mathrm{span}(F) $ the $(d-k+1)$-dimensional linear space spanned by $F$.
    Now, we make two observations which will be used for a \enquote{dimension reduction argument}. First
    \begin{equation*}
        \theta E_o \cap F 
        = (\theta E_o\cap\widetilde{F}) \cap F,
    \end{equation*}
    and second, for any non-negative measurable function $f\colon G(\widetilde{F},1) \to \RR$, one has that
    \begin{equation*}
        \int_{SO_d} f(\theta E_o\cap\widetilde{F}) \nu (\dint \theta)
        = \int_{G(\widetilde{F},1)} f(L) \nu_1(\dint L)
        = \int_{\sphere^{d-1}\cap\widetilde{F}} f(\mathrm{span}(u)) \frac{\sigma_{d-k}(\dint u)}{\omega_{d-k+1}}  .
    \end{equation*}
    The first equality is a simple consequence of the rotation invariance of the Haar measures $\nu$ on $SO_d$ and $\nu_1$ on $G(\widetilde{F},1)$, and the fact that the intersection of linear spaces of dimensions $k$ and $d-k+1$ has dimension one, whenever the two spaces are in general position.
    The second equality is a parametrisation of $G(\widetilde{F},1)$.
    Therefore we can \enquote{reduce the ambient space} in the following way:
    \begin{equation*}
        \PP(\norm{\theta E_o \cap F} \in \,\cdot \,)
        = \int_{\sphere^{d-1}\cap\widetilde{F}} \ind{\norm{\mathrm{span}(u) \cap F )} \in \,\cdot\, } \frac{\sigma_{d-k}(\dint u)}{\omega_{d-k+1}} .
    \end{equation*}
    Decompose the affine flat $F$ into a sum $x_F+ F_o$, where $F_o\in G(d,d-k)$ is translated by a vector $x_F\in F_o^{\perp}$.
    An elementary geometric analysis in the right-angled triangle with vertices the origin, $x_F$ and ${\mathrm{span}(u) \cap F}$ leads to
    \begin{equation*}
        \norm{\mathrm{span}(u) \cap F)}
        = \frac{\norm{x_F}^2}{\lvert \langle u , x_F \rangle \rvert} ,
    \end{equation*}
    and therefore we obtain
    \begin{equation*}
        \PP(\norm{\theta E_o \cap F} \geq r )
        = \int_{\sphere^{d-1}\cap\widetilde{F}} \ind{ \bigl\lvert \bigl\langle u , \frac{x_F}{\norm{x_F}} \bigr\rangle \bigr\rvert \leq \frac{\norm{x_F}}{r} } \frac{\sigma_{d-k}(\dint u)}{\omega_{d-k+1}} .
    \end{equation*}
    This is the normalised $(d-k)$-dimensional volume of a certain neighborhood of the great subsphere $\sphere^{d-1}\cap \widetilde{F} \cap x_F^\perp$ of the sphere  $\sphere^{d-1}\cap \widetilde{F}$.
    If $r\leq \norm{x_F}$, this neighborhood is the full sphere and thus the probability is $1$ in that case. 
    Otherwise, it is some kind of \enquote{slightly bent} cylinder of height $\frac{\norm{x_F}}{r}$. % and it is easy to see that $K_{1,1} = \frac{2 \norm{x_F}}{r} (1+o(1)) $. 
    With the slice integration formula \cite[Corollary A.5]{axler1992harmonic} 
    %\footnote{In this reference they use the notation $\spheren$ for what we denote $\sphere^{n-1}$ (or equivalently $\sphere^{d-1}\cap F$ where $F$ is a $n$-dimensional linear space) and $\sigma_n$ for the \textit{normalized} spherical Lebesgue measure on $S_n$, which we denote by $\frac{1}{\omega_n} \sigma_{n-1}$. Here we have $n=\dim \widetilde{F} = d-k+1$.}
    this is
    \begin{align*}
        \PP(\norm{\theta E_o \cap F} \geq r )
        &= \frac{\omega_{d-k}}{\omega_{d-k+1}} \int_{-1}^1 \ind{ \lvert y \rvert \leq \frac{\norm{x_F}}{r} } (1-y^2)^{\frac{d-k-2}{2}} \dint y .
    \end{align*}
%    Using a linear substitution and the fact that the integrand is an even function, we rewrite this as
%    \begin{align*}
%        \PP(\norm{\theta E_o \cap F} \geq r )
%        %&= \frac{2\omega_{d-k}}{\omega_{d-k+1}} \int_0^1 \ind{ r y \leq \norm{x_F} } (1-y^2)^{\frac{d-k-2}{2}} \dint y 
%        %\\&= \frac{2\omega_{d-k}}{\omega_{d-k+1}} \int_0^r \ind{ z \leq \norm{x_F} } \Bigl(1-\Bigl(\frac{z}{r}\Bigr)^2 \Bigr)^{\frac{d-k-2}{2}} \frac{\dint z}{r} \\
%        &= \frac{2\omega_{d-k}}{\omega_{d-k+1}} \int_0^{\min(r,\norm{x_F})} \Bigl(1-\Bigl(\frac{z}{r} \Bigr)^2 \Bigr)^{\frac{d-k-2}{2}} \frac{\dint z}{r} .
%    \end{align*}
    Taking the derivative with respect to $r$ of the last expression provides the density \eqref{density-intersection} and bounding $1-y^2$ by $1$ proves the estimate \eqref{bigO-intersection}.
\end{proof}

\subsection{Another preparation}

The next lemma is a significant ingredient for computation in the next section. 
\begin{lemma}\label{density}
    Let $d \geq 2$, $k \in \{0,\ldots,d-2\}$, $a \geq k+1$.
    Consider the function $J_{d,k,a}: A(d,k) \times [0,\infty) \to [0, \infty)$ defined by 
    \begin{equation*}
    \begin{split}
        &J_{d,k,a}(E,R):=\int_{(A(E,d-1)\cap [\ball_{R}^d])^{d-k}}[H_{1},\ldots,H_{d-k}]^a (\mu_{d-1}^{E})^{\otimes (d-k)} (\dint (H_{1}, \ldots, H_{d-k})) .
    \end{split}
    \end{equation*}
    Then, if we denote by $s_E$ the distance between the origin a flat $E\in A(d,k)$, we have that 
    \begin{enumerate}
        \item the value $J_{d,k,a}(E,R)$ is a function of the ratio $R/s_E$,
        \item and 
        \begin{align*}
            J_{d,k,a}(E,R) 
            &= \ind{1<R/s_E} C_{d-k,a}^{(1)} 
             + \ind{1\geq R/s_E} \Bigl(\frac{R}{s_E}\Bigr)^{d-k+a} \Bigl[ C_{d-k,a}^{(2)} + O \Bigl( \frac{R^2}{s_E^2}\Bigr)  \Bigr] ,
        \end{align*}
        where the constants $C_{d-k,a}^{(i)}$, $i=1,2$, depend only on the dimension difference $d-k$ and $a$, and the constant involved in bounding the big $O$ term depends only on $d$,  $k$ and $a$.
    \end{enumerate}
\end{lemma}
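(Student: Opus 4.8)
\emph{Proof plan.} The plan is to parametrise the hyperplanes through $E$ by their unit normals, which collapses the $(d-k)$-fold integral onto the unit sphere of the $(d-k)$-dimensional orthogonal complement of $E$, and then to extract the asymptotics as $\rho:=R/s_E\to 0$. Write $E=x_E+E_o$ with $E_o\in G(d,k)$, $x_E\in E_o^{\perp}$, set $q:=d-k$, and put $\rho:=R/s_E$ (with $\rho:=\infty$ if $s_E=0$). A hyperplane $H\supseteq E$ necessarily contains the direction space $E_o$, so its unit normal $u$ lies in $\sphere^{d-1}\cap E_o^{\perp}$, and $H$ is the affine hyperplane through $x_E$ with normal $u$; this gives a bijection between $A(E,d-1)$ and $(\sphere^{d-1}\cap E_o^{\perp})/\{\pm 1\}$. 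Under it I would record three facts: (i) by the isometry covariance and the normalisation of the relative Haar measure (see \cite[Chapter 7.1]{schneider2008stochastic}), $\mu_{d-1}^{E}$ is the image of $b_q\,\sigma_{q-1}\mres(\sphere^{d-1}\cap E_o^{\perp})$ for a constant $b_q>0$ depending only on $q$; (ii) since $u_1,\dots,u_q$ all lie in the $q$-dimensional space $E_o^{\perp}$, the subspace determinant is $[H_1,\dots,H_q]=\lvert\det(u_1,\dots,u_q)\rvert$, computed in any orthonormal basis of $E_o^{\perp}$; (iii) $H\cap\ball_R^d\neq\emptyset$ exactly when $\mathrm{dist}(0,H)=\lvert\langle x_E,u\rangle\rvert\leq R$, i.e.\ $\lvert\langle v,u\rangle\rvert\leq\rho$ with $v:=x_E/s_E$. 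Since rotations of $E_o^{\perp}$ preserve $\sigma_{q-1}$ and $\lvert\det\rvert$ and act transitively on unit vectors, the resulting integral depends on $(E,R)$ only through $q$ and $\rho$; this is item (1), and it shows that every constant appearing below depends only on $q=d-k$ and $a$.

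\emph{The trivial range $\rho\geq 1$.} Here $\lvert\langle v,u\rangle\rvert\leq 1\leq\rho$ is automatic, so all indicators disappear and
\[
J_{d,k,a}(E,R)=C^{(1)}_{q,a}:=b_q^{\,q}\int_{(\sphere^{d-1}\cap E_o^{\perp})^{q}}\lvert\det(u_1,\dots,u_q)\rvert^{a}\,\sigma_{q-1}^{\otimes q}(\dint(u_1,\dots,u_q)),
\]
a finite constant depending only on $q$ and $a$. For $\rho\in[\tfrac12,1]$ both $J_{d,k,a}(E,R)$ and $\rho^{q+a}$ lie between positive constants depending only on $q,a$, so the estimate in item (2) holds there for trivial reasons; it thus suffices to treat $\rho\leq\tfrac12$.

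\emph{Asymptotics for $\rho\leq\tfrac12$.} I would apply the slice decomposition $u_i=t_iv+\sqrt{1-t_i^{2}}\,w_i$ with $t_i=\langle v,u_i\rangle$ and $w_i\in\sphere^{d-1}\cap E_o^{\perp}\cap v^{\perp}$; by \cite[Corollary A.5]{axler1992harmonic}, $\sigma_{q-1}(\dint u_i)=(1-t_i^{2})^{(q-3)/2}\,\dint t_i\,\sigma_{q-2}(\dint w_i)$, while the constraint reads $t_i\in[-\rho,\rho]$. Expanding $\det(u_1,\dots,u_q)$ along the row of $v$-coordinates gives
\[
\det(u_1,\dots,u_q)=\sum_{j=1}^{q}(-1)^{j+1}t_j\Bigl(\prod_{i\neq j}\sqrt{1-t_i^{2}}\Bigr)\det\bigl(w_i:i\neq j\bigr),
\]
which is $O(\rho)$ since $\lvert t_j\rvert\leq\rho$. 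Substituting $t_i=\rho\tau_i$, $\tau_i\in[-1,1]$, pulls out $\rho^{q}$ from the one-dimensional integrals and, uniformly for $\rho\leq\tfrac12$, yields $(1-\rho^{2}\tau_i^{2})^{(q-3)/2}=1+O(\rho^{2})$, $\sqrt{1-\rho^{2}\tau_i^{2}}=1+O(\rho^{2})$ and $\det(u_1,\dots,u_q)=\rho\bigl(D(\tau,w)+O(\rho^{2})\bigr)$, where $D(\tau,w):=\sum_{j=1}^{q}(-1)^{j+1}\tau_j\det(w_i:i\neq j)$ is bounded. Factoring out also $\rho^{a}$ from $\lvert\det\rvert^{a}$,
\[
J_{d,k,a}(E,R)=b_q^{\,q}\,\rho^{\,q+a}\int_{[-1,1]^{q}}\int_{(\sphere^{d-1}\cap E_o^{\perp}\cap v^{\perp})^{q}}\bigl\lvert D(\tau,w)+O(\rho^{2})\bigr\rvert^{a}\bigl(1+O(\rho^{2})\bigr)\,\sigma_{q-2}^{\otimes q}(\dint w)\,\dint\tau .
\]
Since $a\geq k+1\geq 1$, the map $x\mapsto\lvert x\rvert^{a}$ is $C^{1}$ with derivative bounded on bounded sets, so $\bigl\lvert\,\lvert D+O(\rho^{2})\rvert^{a}-\lvert D\rvert^{a}\,\bigr\rvert=O(\rho^{2})$ uniformly; integrating over the fixed compact domain gives $J_{d,k,a}(E,R)=\rho^{\,q+a}\bigl(C^{(2)}_{q,a}+O(\rho^{2})\bigr)$ with $C^{(2)}_{q,a}:=b_q^{\,q}\int_{[-1,1]^{q}}\int\lvert D(\tau,w)\rvert^{a}\,\sigma_{q-2}^{\otimes q}(\dint w)\,\dint\tau$, which is item (2).

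\emph{Main obstacle.} The genuinely delicate point is the uniformity of the error term: one must check that the $O(\rho^{2})$ corrections coming from the slice density, from $\sqrt{1-t_i^{2}}$, and—above all—from linearising the non-smooth function $\lvert\cdot\rvert^{a}$ near $0$ combine, after integration over the fixed compact domain, into a single $O(\rho^{2})$ relative to the leading $\rho^{q+a}$. The hypothesis $a\geq k+1$ (hence $a\geq 1$) is precisely what makes $\lvert\cdot\rvert^{a}$ Lipschitz near $0$ and rescues this step, and the possible blow-up of $(1-t^{2})^{(q-3)/2}$ at $t=\pm1$ when $q=2$ is harmless because everything has been localised to $\lvert t_i\rvert\leq\rho\leq\tfrac12$.
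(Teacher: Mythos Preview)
Your proof is correct and follows essentially the same approach as the paper's: parametrise hyperplanes through $E$ by unit normals in $E_o^\perp$, rewrite the hitting condition as $|\langle u,v\rangle|\leq\rho$, apply the slice integration formula, rescale $t_i=\rho\tau_i$, and extract the factor $\rho^{q+a}$. One small wording issue: for $a=1$ the map $x\mapsto|x|^a$ is not $C^1$ at the origin, only Lipschitz; your conclusion $||D+O(\rho^2)|^a-|D|^a|=O(\rho^2)$ is nonetheless valid since $|\cdot|^a$ is Lipschitz on bounded sets for all $a\geq 1$. Your separate treatment of $\rho\in[\tfrac12,1]$ is a genuine improvement in care over the paper, which applies the estimate $(1-\rho^2 z_i^2)^{(q-3)/2}=1+O(\rho^2)$ on the whole range $\rho\leq 1$; for $q=2$ that factor equals $(1-\rho^2 z_i^2)^{-1/2}$ and is not uniformly $1+O(\rho^2)$ near $\rho=1$, so your localisation to $\rho\leq\tfrac12$ (with the remaining range handled by boundedness) is the cleaner way to make the argument uniform.
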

\begin{remark}
    The constants $C_{d-k,a}^{(1)}$ and $C_{d-k,a}^{(2)}$ are represented in the proof below by the integrals \eqref{constantC1} and \eqref{eq:DefConst} respectively.
\end{remark}
\begin{proof} 
    In this proof we will drop the indices $d$, $k$ and $a$ appearing in $J_{d,k,a}$ and write only $J$ instead.
    Also we define $E_o\in G(d,k)$ and $x_E\in E$ to be, respectively, the $k$-dimensional linear subspace parallel to $E$ and the orthogonal projection of the origin onto $E$. In particular $E=E_o+x_E$ and $s_E=\norm{x_E}$.
    
    Let $\theta \in SO_d$ be an arbitrary rotation and $\alpha>0$.
    By considering the substitutions $\widetilde{H_i} = \alpha \theta H_i$, $i\in\{1,\ldots,d-k\}$, we see  that $J(E_o+ x_E, R) = J(\theta E_o+\alpha \theta x_E,\alpha R)$.
    In particular, by taking $\alpha=s_E^{-1}=\norm{x_E}^{-1}$, we see that the first claim of the lemma holds.
    
    The rest of the proof is dedicated to show the second claim.
    First, we represent the affine hyperplanes $H_i$, $i\in\{1,\ldots,d-k\}$, as sums $L_i+x_E$, where the $L_i$'s are the linear hyperplanes parallel to the $H_i$'s.
    Note that through this rewriting we can replace the subspace determinant in the definition of $J$ by $[L_{1},\ldots,L_{d-k}]$, since it is invariant under translations of its components.
    Also we recall that, by definition, $\mu_{d-1}^{E}(\dint H_{i}) = \nu_{d-1}^{E_{o}}(\dint L_{1}) $.
    We get
    \begin{align} \label{eq:J1}
        J(E_{o}+x_{E},R)
        &= \int_{(G(E_{o},d-1))^{d-k}}\prod_{i=1}^{d-k} \ind{L_{i}+x_{E} \in [\ball_{R}^d]} 
        \\&\hspace{3cm} \times [L_{1},\ldots,L_{d-k}]^a (\nu_{d-1}^{E_{o}})^{\otimes (d-k)} (\dint (L_{1}, \ldots, L_{d-k})).
        \notag
    \end{align}
    Next, we parametrise the hyperplanes $L_i$ by their unit orthonormal vectors $u_i$. By duality these vectors belong to $E_o^\perp$. Indeed, for any of the $u_i$'s, the corresponding orthogonal hyperplanes $u_i^\perp = L_i$ contain $E_o$. More precisely, we use the transformation
    \[ \int_{G(E_o,d-1)} f(L) \nu_{d-1}^{E_o} (\dint L)
    = \int_{ \sphere^{d-1} \cap E_o^\perp } f(u^\perp) \frac{\sigma_{d-k-1} (\dint u)}{\omega_{d-k}} , \]
    which holds for any non-negative measurable function $f\colon G(E_o,d-1) \to \RR$ and follows from the invariance and uniqueness of the Haar measure on $G(E_o,d-1)$ similarly to \eqref{eq:July13a}.
    Iterating this formula over the $(d-k)$-fold integral in \eqref{eq:J1} and using the fact that 
    \[ [u_1^\perp,\ldots,u_{d-k}^\perp] = [u_1,\ldots,u_{d-k}], \] we get
    \begin{align*}
        J(E_{o}+x_{E},R)
        &=\frac{1}{(\omega_{d-k})^{d-k}} \int_{(\sphere^{d-1}\cap E_o^\perp)^{d-k}} \prod_{i=1}^{d-k} \ind{u_i^\perp+x_{E} \in [\ball_{R}^d]} 
        \\&\hspace{3cm} \times [u_{1},\ldots,u_{d-k}]^a \sigma_{d-k-1}^{\otimes (d-k)} (\dint (u_{1}, \ldots, u_{d-k})).
    \end{align*}
    
    Now, we will simplify the indicator functions.
    For this we observe that a hyperplane of the form $u^\perp + x_E$ can be written as $ u^\perp + \langle u , x_E \rangle u $. Indeed, these two expressions represent the same hyperplane characterised by being parallel to $u^\perp$ and containing the point $x_E$.
    It makes clear that such a hyperplane intersects the ball $\ball_R^d$ if and only if the scalar product $\langle u , x_E \rangle$ is between $-R$ and $R$.
    Therefore,
    \begin{align} \label{eq:J2}
        J(E_{o}+x_{E},R)
        &=\frac{1}{(\omega_{d-k})^{d-k}} \int_{(\sphere^{d-1}\cap E_o^\perp)^{d-k}} \prod_{i=1}^{d-k} \ind{ | \langle u_i , x_E\rangle | \leq R} 
        \\&\hspace{3cm} \times [u_{1},\ldots,u_{d-k}]^a \sigma_{d-k-1}^{\otimes (d-k)} (\dint (u_{1}, \ldots, u_{d-k})).
        \notag
    \end{align}
    At this point, we split the proof by considering whether $\norm{x_E} < R$ (easy part) or $\norm{x_E} \geq R$ (more intricate part).
    
    \paragraph{Case where $\norm{x_E} < R$ :} Thanks to the Cauchy-Schwarz inequality we see that, in this case, the conditions in the indicator functions in \eqref{eq:J2} are always satisfied and therefore
    \begin{align} \label{constantC1}
        &J(E_{o}+x_{E},R)=\frac{1}{(\omega_{d-k})^{d-k}} \int_{(\sphere^{d-1}\cap E_o^\perp)^{d-k}} [u_{1},\ldots,u_{d-k}]^a \sigma_{d-k-1}^{\otimes (d-k-1)} (\dint (u_{1}, \ldots, u_{d-k})).
    \end{align}
    The later expression is a constant which depends only on $a$ and $d-k$, and therefore the lemma is proved for the case where $\norm{x_E} < R$.
    %\footnote{Moreover we could compute this constant explicitly. Theorem 8.2.2 in \cite{schneider2008stochastic} gives the moments of $[x_{1},\ldots,x_{d-k}]$ for points uniformly distributed in the ball (!!) by using polar coordinates this can easily be related to our constant. \textcolor{blue}{Do we want to do that ?}}
    
    \paragraph{Case where $\norm{x_E} \geq R$ :}
    
    Now we will adapt the slice integration formula \cite[Corollary A.5]{axler1992harmonic} to transform integrals over the $(d-k-1)$-dimensional unit sphere $\sphere^{d-1}\cap E_o^\perp$ of the space $E_o^\perp$ into integrals over the $(d-k-2)$-dimensional slices obtained by cutting that sphere by hyperplanes parallel to $x_E^\perp$.
    In this set-up, the slice integration formula can be written as 
    \begin{align*}
        \int_{\sphere^{d-1}\cap E_o^\perp} f(u) \sigma_{d-k}(\dint u)
        &= \int_{-1}^1 (1-y^2)^{\frac{(d-k-3) }{2}} \int_{(\sphere^{d-1}\cap E_{o}^{\perp})\cap x_{E}^{\perp}}
        \\&\qquad\qquad \times f\left( \sqrt{1-y^2} u'+y\frac{x_{E}}{\norm{x_E}} \right) \sigma_{d-k-2} (\dint u') \, \dint y ,
    \end{align*}
    for any non-negative measurable function $f\colon \sphere^{d-1}\cap E_o^\perp \to \RR$.
    Iterating this formula over the $(d-k)$-fold integral in \eqref{eq:J2}, we get
    \begin{align*}
        J(E_{o}+x_{E},R)
        &=\frac{1}{(\omega_{d-k})^{d-k}} \int_{[-1,1]^{d-k}} \prod_{i=1}^{d-k} (1-y_i^2)^{\frac{(d-k-3) }{2}} \int_{((\sphere^{d-1}\cap E_o^\perp)\cap x_E^\perp)^{d-k}} 
        \\&\quad \times \prod_{i=1}^{d-k} \ind{ \Bigl| \Bigl\langle \sqrt{1-y_i^2} u'_i+y_i\frac{x_{E}}{\norm{x_E}} , x_E \Bigr\rangle \Bigr| \leq R} 
        \\&\quad \times \Bigl[\sqrt{1-y_1^2} u'_1+y_1\frac{x_{E}}{\norm{x_E}},\ldots,\sqrt{1-y_i^2} u'_{d-k}+y_{d-k}\frac{x_{E}}{\norm{x_E}} \Bigr]^a 
        \\&\quad \times \sigma_{d-k-2}^{\otimes (d-k)} (\dint (u'_{1}, \ldots, u'_{d-k})) \, \dint (y_1,\ldots,y_{d-k}) .
        \notag
    \end{align*}    
    We will now simplify the condition in the indicator functions.
    Using that, for any $i\in\{1,\ldots,d-k\}$, the vectors $u'_i$ and $x_E$ are orthogonal, we see that
    \[ \Bigl\langle \sqrt{1-y_i^2} u'_i+y_i\frac{x_{E}}{\norm{x_E}} , x_E \Bigr\rangle
    = \Bigl\langle y_i\frac{x_{E}}{\norm{x_E}} , x_E \Bigr\rangle
    = y_i \, \norm{x_E} , \]
    and thus we get the simplification
    \begin{align*}
        \prod_{i=1}^{d-k} \ind{ \Bigl| \Bigl\langle \sqrt{1-y_i^2} u'_i+y_i\frac{x_{E}}{\norm{x_E}} , x_E \Bigr\rangle \Bigr| \leq R} 
        = \prod_{i=1}^{d-k} \ind{ |y_i| \leq \frac{R}{\norm{x_E}} } .
    \end{align*}
    Since we are in the case where $\norm{x_E} \geq R$, we have that the interval $[- \frac{R}{\norm{x_E}} , \frac{R}{\norm{x_E}} ]$ is a subset of $[-1,1]$ and therefore we can pass the lately discussed conditions into the domain of the outer integral as follows:
    \begin{align*}
        J(E_{o}+x_{E},R)
        &=\frac{1}{(\omega_{d-k})^{d-k}} \int_{\bigl[ -\frac{R}{\norm{x_E}} , \frac{R}{\norm{x_E}} \bigr]^{d-k}} \prod_{i=1}^{d-k} (1-y_i^2)^{\frac{(d-k-3) }{2}} \int_{((\sphere^{d-1}\cap E_o^\perp)\cap x_E^\perp)^{d-k}} 
        \\&\quad \times \Bigl[\sqrt{1-y_1^2} u'_1+y_1\frac{x_{E}}{\norm{x_E}},\ldots,\sqrt{1-y_i^2} u'_{d-k}+y_{d-k}\frac{x_{E}}{\norm{x_E}} \Bigr]^a 
        \\&\quad \times \sigma_{d-k-2}^{\otimes (d-k)} (\dint (u'_{1}, \ldots, u'_{d-k})) \, \dint (y_1,\ldots,y_{d-k}) .
        \notag
    \end{align*}  
 In the next step we apply for each $i\in\{1,\ldots,d-k\}$ the linear substitution $z_i = \frac{\norm{x_E}}{R}y_i$ which have the effect of bringing the integration domain of the outer integral back to the unit cube $[-1,1]^{d-k}$, i.e.,
    \begin{align*}
        &J(E_{o}+x_{E},R)
        \\&=\Bigl(\frac{R}{\omega_{d-k} \, \norm{x_E}}\Bigr)^{d-k} \int_{[-1,1]^{d-k}} \prod_{i=1}^{d-k} \Bigl(1- \frac{R^2}{\norm{x_E}^2}z_i^2 \Bigr)^{\frac{(d-k-3) }{2}} \int_{((\sphere^{d-1}\cap E_o^\perp)\cap x_E^\perp)^{d-k}} 
        \\&\quad \times \Bigl[\sqrt{1- \Bigl(\frac{R z_1}{\norm{x_E}}\Bigr)^2} u'_1+\frac{R z_1}{\norm{x_E}} \frac{x_{E}}{\norm{x_E}} ,\ldots, \sqrt{1- \Bigl(\frac{R z_{d-k}}{\norm{x_E}}\Bigr)^2} u'_{d-k}+\frac{R z_{d-k}}{\norm{x_E}} \frac{x_{E}}{\norm{x_E}} \Bigr]^a 
        \\&\quad \times \sigma_{d-k-2}^{\otimes (d-k)} (\dint (u'_{1}, \ldots, u'_{d-k})) \, \dint (z_1,\ldots,z_{d-k}) .
        \notag
    \end{align*}  
     Using that the vectors $u'_i$'s are orthogonal to $x_E$ and multilinearity we can take out of the subspace determinant the factor $\frac{R}{\norm{x_E}}$, which appears in front of each term $z_i \frac{x_E}{\norm{x_E}}$. This gives the equality
    \begin{align*}
        &J(E_{o}+x_{E},R)
        \\&=\frac{1}{(\omega_{d-k})^{d-k}} \Bigl(\frac{R}{\norm{x_E}}\Bigr)^{d-k+a} \int_{[-1,1]^{d-k}} \prod_{i=1}^{d-k} \Bigl(1- \frac{R^2}{\norm{x_E}^2}z_i^2 \Bigr)^{\frac{(d-k-3) }{2}} \int_{((\sphere^{d-1}\cap E_o^\perp)\cap x_E^\perp)^{d-k}}
        \\&\quad \times \Bigl[\sqrt{1- \Bigl(\frac{R z_1}{\norm{x_E}}\Bigr)^2} u'_1+z_1 \frac{x_{E}}{\norm{x_E}} ,\ldots, \sqrt{1- \Bigl(\frac{R z_{d-k}}{\norm{x_E}}\Bigr)^2} u'_{d-k}+z_{d-k} \frac{x_{E}}{\norm{x_E}} \Bigr]^a 
        \\&\quad \times \sigma_{d-k-2}^{\otimes (d-k)} (\dint (u'_{1}, \ldots, u'_{d-k})) \, \dint (z_1,\ldots,z_{d-k}) .
    \end{align*}  
    
    In the final part of this proof we will approximate the product and subspace determinant above by terms independent from $R$ and $\norm{x_E}$, and bound the approximation error in terms of the ratio $\frac{R}{\norm{x_E}}$. This will eventually allow us to conclude the proof.
    
    Since in the last integral the variables $z_i$ all belong  to the bounded interval $[-1,1]$, we have that the product is arbitrarily close to $1$ as long as the ratio $\frac{R}{\norm{x_E}}$ is small enough. More precisely one can check that
    \begin{align*}
        \prod_{i=1}^{d-k} \Bigl(1- \frac{R^2}{\norm{x_E}^2}z_i^2 \Bigr)^{\frac{(d-k-3) }{2}}
        = 1 - O \Bigl( \frac{R^2}{\norm{x_E}^2} \Bigr) ,
    \end{align*}
    where the constant in the big $O$ term is a bounded positive number depending only on $d$ and $k$. %by expending the product we can take this constant optimally and that would be $(d-k)(d-k-3)/2$.
    
    Similarly as above we have that, for $z_i\in[-1,1]$,
    \[ \sqrt{1- \Bigl(\frac{R z_i}{\norm{x_E}}\Bigr)^2} = 1 - O \Bigl( \frac{R^2}{\norm{x_E}^2} \Bigr) . \]
    In particular, for any given $u'_i$'s and $z_i$'s the subspace determinant in the last integral tends to 
    \( \Bigl[ u'_1+z_1 \frac{x_{E}}{\norm{x_E}} ,\ldots, u'_{d-k}+z_{d-k} \frac{x_{E}}{\norm{x_E}} \Bigr] \), as the ratio $\frac{R}{\norm{x_E}}$ goes to $0$. We still need to bound the error involved in this approximation.
    To do this we observe that the (subspace) determinant is a locally Lipschitz function and that each of the involved vectors
    \( \sqrt{1- \Bigl(\frac{R z_i}{\norm{x_E}}\Bigr)^2} u'_i+z_i \frac{x_{E}}{\norm{x_E}} \)
    belong to the bounded cylinder 
    \( ((\sphere^{d-1}\cap E_o^\perp)\cap x_E^\perp) + [-1,1] \frac{x_E}{\norm{x_E}}  \).
    Therefore,
    \begin{align*}
        & \Bigl[\sqrt{1- \Bigl(\frac{R z_1}{\norm{x_E}}\Bigr)^2} u'_1+z_1 \frac{x_{E}}{\norm{x_E}} ,\ldots, \sqrt{1- \Bigl(\frac{R z_{d-k}}{\norm{x_E}}\Bigr)^2} u'_{d-k}+z_{d-k} \frac{x_{E}}{\norm{x_E}} \Bigr]
        \\& = \Bigl[ u'_1+z_1 \frac{x_{E}}{\norm{x_E}} ,\ldots,  u'_{d-k}+z_{d-k} \frac{x_{E}}{\norm{x_E}} \Bigr] + O \Bigl( \frac{R^2}{\norm{x_E}^2}\Bigr) .
    \end{align*}
    Putting these approximations together and remembering that the constants in the big $O$ terms depend only on $d$ and $k$, we can take these error terms out of the integral of investigation, which gives 
    \begin{align*}
        J(E_{o}+x_{E},R)
        &= \Bigl(\frac{R}{\norm{x_E}}\Bigr)^{d-k+a} \Bigl[ C_{d-k,a}^{(2)} + O \Bigl( \frac{R^2}{\norm{x_E}^2}\Bigr)  \Bigr],
    \end{align*}     
    where the constants involved in bounding the big $O$ term depend now on $a$ (and also $d$ and $k$ as before), and $C_{d-k,a}^{(2)}$ is the constant, which depends only on $d-k$ and $a$ and is defined by
    \begin{equation}\label{eq:DefConst}
    \begin{split}
        C_{d-k,a}^{(2)}
        &:= \frac{1}{(\omega_{d-k})^{d-k}} \int_{[-1,1]^{d-k}} \int_{((\sphere^{d-1}\cap E_o^\perp)\cap x_E^\perp)^{d-k}} 
         \Bigl[ u'_1+z_1 \frac{x_{E}}{\norm{x_E}} ,\ldots
        \\&\hspace{1cm} \ldots, u'_{d-k}+z_{d-k} \frac{x_{E}}{\norm{x_E}} \Bigr]^a \sigma_{d-k-2}^{\otimes (d-k)} (\dint (u'_{1}, \ldots, u'_{d-k})) \, \dint (z_1,\ldots,z_{d-k}) .
        \end{split}    
    \end{equation}
    This concludes the proof.
\end{proof}

%%%%%%%%%%%%%%%%%%%%%%%%%%%%%%%
\section{Convergence of the intensity measure}\label{sec:ConvIntensityMeasure}
%%%%%%%%%%%%%%%%%%%%%%%%%%%%%%%

%%%%%%%
\subsection{Pointwise convergence of the density function}
%%%%%%%

In this section we consider the convergence of the intensity measure $L_{t,R}$ of the intersection point process $\Xi_{t,R}$, as $t\to\infty$. We start with a lemma dealing with the convergence of the density function $f_{t,R}$ of $L_{t,R}$.

\begin{lemma} \label{densityofintensitymeasure}
    The density $f_{t,R}$ of the intensity measure $L_{t,R}$, with respect to the Lebesgue measure on $\RR^d\setminus \{0\}$, satisfies
    \begin{align} 
        f_{t,R}(x)
        &= \ind{\norm{x}<R} \frac{C_{d,1}^{(1)}}{d!} t^d
        + \ind{\norm{x}\geq R} \Bigl(\frac{R}{\norm{x}}\Bigr)^{d+1} t^d \Bigl[ \frac{C_{d,1}^{(2)}}{d!} + O \Bigl( \frac{R^2}{\norm{x}^2}\Bigr)  \Bigr] ,
    \end{align}
    where the constants $C_{d,1}^{(i)}$, $i\in\{1,2\}$, are the same constants as in Lemma \ref{density} applied with $k=0$ and $a=1$.
    In particular, if $R=t^{-\frac{d}{d+1}}$, it follows immediately that
%    \begin{align*}
%        f_{t,R}(x)
%        &= \ind{\norm{x}<t^{-\frac{d}{d+1}}} C_{d,1}^{(1)} t^d 
%        + \ind{\norm{x}\geq t^{-\frac{d}{d+1}}} \Bigl(\frac{1}{\norm{x}}\Bigr)^{d+1} \Bigl[ C_{d,1}^{(2)} + O \Bigl( \frac{t^{-\frac{2d}{d+1}}}{\norm{x}^2}\Bigr)  \Bigr] ,
%    \end{align*}
    for any fixed $x\in\RR^d\setminus\{0\}$, 
    \begin{align*}
        \lim_{t\to\infty} f_{t,R}(x) 
        &= C_d \, \norm{x}^{-(d+1)} ,
    \end{align*}
    with
    %\begin{equation}%\label{eq:DefConstCd}
    %\begin{split}
    %    C_d
    %    &:= \frac{1}{d! \, (\omega_{d-k})^{d-k}} \int_{[-1,1]^{d-k}} \int_{((\sphere^{d-1}\cap E_o^\perp)\cap x_E^\perp)^{d-k}} 
    %     \Bigl[ u'_1+z_1 \frac{x_{E}}{\norm{x_E}} ,\ldots
    %    \\&\hspace{1cm} \ldots, u'_{d-k}+z_{d-k} \frac{x_{E}}{\norm{x_E}} \Bigr]^a \sigma_{d-k-2}^{\otimes (d-k)} (\dint (u'_{1}, \ldots, u'_{d-k})) \, \dint (z_1,\ldots,z_{d-k}) .
    %    \end{split}    
    %\end{equation}
    \begin{equation}\label{eq:DefConstCd}
    \begin{split}
        C_d
        &:= \frac{1}{d! \, (\omega_{d})^{d}} \int_{[-1,1]^{d}} \int_{((\sphere^{d-1}\cap e_{d}^\perp)^{d}} 
         \Bigl[ u_1+z_1 e_{d} ,\ldots, u_{d}+z_{d} e_{d} \Bigr] 
        \sigma_{d-2}^{\otimes d} (\dint (u_{1}, \ldots, u_{d}))
        \\&\hspace{10cm} \times \dint (z_1,\ldots,z_{d}) .
        \end{split}    
    \end{equation}
\end{lemma}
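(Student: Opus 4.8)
The plan is to combine the Mecke representation \eqref{Mecke formula} of $L_{t,R}$ with the affine Blaschke--Petkantschin formula \eqref{eq:B-P} and then to quote Lemma~\ref{density}. First I would rewrite \eqref{Mecke formula} as an integral over all of $A(d,d-1)^d$, inserting the indicator $\prod_{i=1}^{d}\ind{H_i\cap\ball_R^d\neq\emptyset}$, and apply \eqref{eq:B-P} in the extreme case $\ell=d$. In this case $A(d,d-\ell)=A(d,0)$ is the space of points, which under the normalisation of \cite{schneider2008stochastic} carries the Lebesgue measure as $\mu_0$; the exponent of the subspace determinant is $d-\ell+1=1$; and the constant collapses to $c_{d,d}=\frac{\omega_1\cdots\omega_d}{\omega_1\cdots\omega_d}\bigl(\frac{\omega_d}{\omega_d}\bigr)^d=1$. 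Since the point $E$ over which the outer integration runs is precisely $H_1\cap\cdots\cap H_d$, this identifies $L_{t,R}$ as absolutely continuous with
\[
f_{t,R}(x)=\frac{t^d}{d!}\int_{(A(\{x\},d-1)\cap[\ball_R^d])^{d}}[H_1,\ldots,H_d]\,(\mu_{d-1}^{\{x\}})^{\otimes d}(\dint(H_1,\ldots,H_d))=\frac{t^d}{d!}\,J_{d,0,1}(x,R),
\]
where in the last equality I recognise the integral defining $J_{d,0,1}$ in Lemma~\ref{density}, observing that for the point $E=\{x\}\in A(d,0)$ one has $s_E=\mathrm{dist}(0,\{x\})=\norm{x}$.

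Next I would simply invoke Lemma~\ref{density} with $k=0$ and $a=1$ (admissible since $d\geq2$ and $a=1\geq k+1$). Substituting $s_E=\norm{x}$, $d-k=d$ and $d-k+a=d+1$ into its expansion and multiplying by $t^d/d!$ yields the asserted formula for $f_{t,R}$, with $C_{d,1}^{(1)},C_{d,1}^{(2)}$ exactly the constants of Lemma~\ref{density} and the $O$-constant depending only on $d$. For the limit, fix $x\in\RR^d\setminus\{0\}$ and take $R=t^{-d/(d+1)}$; since $R\to0$ the event $\norm{x}\geq R$ holds for all large $t$, only the second term survives, and $R^{d+1}t^d=(t^{-d/(d+1)})^{d+1}t^{d}=1$, whence
\[
f_{t,R}(x)=\norm{x}^{-(d+1)}\Bigl[\tfrac{C_{d,1}^{(2)}}{d!}+O\bigl(\tfrac{R^2}{\norm{x}^2}\bigr)\Bigr]\longrightarrow \tfrac{C_{d,1}^{(2)}}{d!}\,\norm{x}^{-(d+1)}.
\]
It then remains to check that $C_d:=C_{d,1}^{(2)}/d!$ equals the integral in \eqref{eq:DefConstCd}: this is just the representation \eqref{eq:DefConst} written out for $k=0$ and $a=1$, using that $E_o\in G(d,0)$ forces $E_o^\perp=\RR^d$ and that, by rotation invariance of the integrand, one may take $x_E/\norm{x_E}=e_d$, so $(\sphere^{d-1}\cap E_o^\perp)\cap x_E^\perp=\sphere^{d-1}\cap e_d^\perp$.

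I do not expect a genuine obstacle: the statement follows by a direct transformation together with Lemma~\ref{density}. The only points needing a little care are the bookkeeping of the Blaschke--Petkantschin constant (confirming $c_{d,d}=1$ and that $\mu_0$ is the Lebesgue measure on $\RR^d$) and the degeneration of the \emph{fixed flat} in Lemma~\ref{density} to the single intersection point, which turns the parameter $s_E$ into $\norm{x}$ and makes the whole expansion uniform in the direction of $x$.
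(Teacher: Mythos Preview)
Your proposal is correct and follows exactly the paper's approach: apply the Blaschke--Petkantschin formula \eqref{eq:B-P} with $\ell=d$ to \eqref{Mecke formula} to identify the density as $\frac{t^d}{d!}J_{d,0,1}(x,R)$, then invoke Lemma~\ref{density} with $k=0$, $a=1$. You supply more bookkeeping detail (the verification $c_{d,d}=1$, the explicit limit computation, the identification of $C_d$ via \eqref{eq:DefConst}) than the paper, which states these steps more tersely, but the argument is the same.
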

\begin{proof}
    Applying the Blaschke-Petkantschin formula \eqref{eq:B-P} to the integral representation \eqref{Mecke formula} of the intensity measure we get, for any Borel set $A\subset\RR^d\setminus\{0\}$,
    \begin{align}
        L_{t,R}(A) 
        &= \int_{A} \frac{t^d}{d!} \int_{(A(x,d-1) \cap [\ball_{R}^d])^d} [H_{1},\ldots,H_{d}] \mu_{d-1}^{x}(\dint(H_{1},\ldots,H_{d})) \dint x ,
    \end{align}
    and therefore the density function $f_{t,R}$ of $L_{t,R}$ is given by
    \[ f_{t,R}(x) = \frac{t^d}{d!} \int_{(A(x,d-1) \cap [\ball_{R}^d])^d} [H_{1},\ldots,H_{d}] \mu_{d-1}^{x}(\dint(H_{1},\ldots,H_{d})),\qquad x\in\RR^d\setminus\{0\}. \]
    Applying Lemma \ref{density} with $a=1$ and $k=0$ yields the result.
\end{proof}

%%%%%%%
\subsection{Convergence in total variation of the (restricted) intensity measure}
%%%%%%%

Recall that $L_{t,R}$ is the intensity measure of the intersection point process $\Xi_{t,R}$ where $R=t^{-\frac{d}{d+1}}$ and $M$ is the intensity measure of the limiting Poisson point process $\zeta$ which has density $C_d \norm{x}^{-(d+1)}$, where $C_d$ is the constant defined in \eqref{eq:DefConstCd}. Also, for $r>0$, we denote by $L_{t,R}\mres{(\ball_{r}^d)^c}$, $M\mres{(\ball_{r}^d)^c}$ the restrictions of these measures to the complement of the ball $\ball_r^d$.
The total variation distance between the two (restricted) measures is defined as
\begin{equation*}
    d_{\rm TV}(L_{t,R}\mres{(\ball_{r}^d)^c},M\mres{(\ball_{r}^d)^c})
    := \sup \{ \lvert L_{t,R}(A) - M(A) \rvert  \} ,
\end{equation*}
where the supremum is taken over all Borel set $A\subset(\ball_r^d)^c:=\RR^d\setminus\ball_{r}^d$.
As a simple corollary of Lemma \ref{densityofintensitymeasure} we find that for any fixed $r$ this distance goes to zero as the intensity goes to infinity.

\begin{corollary} \label{totalvariation}
    Let $r>0$.
    Assume that $R=t^{-\frac{d}{d+1}} < r$. Then there is a constant $C>0$ depending only on $d$ such that
    \[ d_{\rm TV}(L_{t,R}\mres{(\ball_{r}^d)^c},M\mres{(\ball_{r}^d)^c}) \leq C t^{-\frac{2d}{d+1}} r^{-3}. \]
    In particular,
    \[ d_{\rm TV}(L_{t,R}\mres{(\ball_{r}^d)^c},M\mres{(\ball_{r}^d)^c}) \to 0,\quad \text{ as } t\to \infty. \]
\end{corollary}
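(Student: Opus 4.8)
The plan is to bound the total variation distance by the $L^{1}$-distance between the two densities and then to insert the expansion from Lemma~\ref{densityofintensitymeasure}. Since $R=t^{-\frac{d}{d+1}}<r$, every $x$ with $\norm{x}\geq r$ satisfies $\norm{x}\geq R$, so on $(\ball_{r}^d)^c$ the measure $L_{t,R}$ has Lebesgue density $f_{t,R}$ while $M$ has Lebesgue density $x\mapsto C_d\norm{x}^{-(d+1)}$. For any Borel set $A\subset(\ball_{r}^d)^c$ one then has
\begin{equation*}
    \lvert L_{t,R}(A)-M(A)\rvert
    = \Bigl\lvert \int_A \bigl( f_{t,R}(x)-C_d\norm{x}^{-(d+1)} \bigr)\,\dint x \Bigr\rvert
    \leq \int_{(\ball_{r}^d)^c} \bigl\lvert f_{t,R}(x)-C_d\norm{x}^{-(d+1)} \bigr\rvert\,\dint x ,
\end{equation*}
and taking the supremum over all such $A$ reduces the corollary to an estimate of the integral on the right.

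For the integrand I would invoke Lemma~\ref{densityofintensitymeasure}: for $\norm{x}\geq r>R$,
\begin{equation*}
    f_{t,R}(x)
    = \Bigl(\frac{R}{\norm{x}}\Bigr)^{d+1} t^d \Bigl[ \frac{C_{d,1}^{(2)}}{d!} + O\Bigl(\frac{R^2}{\norm{x}^2}\Bigr) \Bigr] ,
\end{equation*}
where, by Lemma~\ref{density} applied with $k=0$ and $a=1$, the constant hidden in the $O$-term depends only on $d$. The choice $R=t^{-\frac{d}{d+1}}$ gives $R^{d+1}t^d=1$, hence $(R/\norm{x})^{d+1}t^d=\norm{x}^{-(d+1)}$, and comparing \eqref{eq:DefConst} with \eqref{eq:DefConstCd} shows $C_d=C_{d,1}^{(2)}/d!$; thus the leading terms cancel and, using that $R^2/\norm{x}^2\leq R^2/r^2\leq 1$,
\begin{equation*}
    \bigl\lvert f_{t,R}(x)-C_d\norm{x}^{-(d+1)} \bigr\rvert
    = \norm{x}^{-(d+1)}\, O\Bigl(\frac{R^2}{\norm{x}^2}\Bigr)
    \leq C\, R^2\, \norm{x}^{-(d+3)} ,\qquad \norm{x}\geq r,
\end{equation*}
for a constant $C$ depending only on $d$.

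It then remains to integrate in polar coordinates,
\begin{equation*}
    \int_{\norm{x}\geq r} \norm{x}^{-(d+3)}\,\dint x
    = \omega_d\int_r^\infty \rho^{-(d+3)}\rho^{d-1}\,\dint\rho
    = \omega_d\int_r^\infty\rho^{-4}\,\dint\rho
    = \frac{\omega_d}{3}\, r^{-3} ,
\end{equation*}
which yields $d_{\rm TV}(L_{t,R}\mres{(\ball_{r}^d)^c},M\mres{(\ball_{r}^d)^c})\leq C\, R^2\, r^{-3}=C\, t^{-\frac{2d}{d+1}}\, r^{-3}$ with a new constant $C$ depending only on $d$, and letting $t\to\infty$ with $r$ fixed gives the final assertion. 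I do not expect a genuine obstacle here: the statement is a bookkeeping consequence of Lemma~\ref{densityofintensitymeasure}, and the only points that deserve attention are that the $O$-term there is uniform in $x$ (so that it may be pulled out of the integral) and that the two leading constants really coincide, so that they cancel in the difference $f_{t,R}(x)-C_d\norm{x}^{-(d+1)}$.
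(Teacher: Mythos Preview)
Your proof is correct and follows essentially the same route as the paper: both arguments apply Lemma~\ref{densityofintensitymeasure} on $(\ball_r^d)^c$, use $R^{d+1}t^d=1$ to identify (and cancel) the leading term with the density of $M$, and then integrate the remaining $O(R^2\norm{x}^{-(d+3)})$ term over $\norm{x}\geq r$ to obtain $CR^2r^{-3}$. Your write-up is slightly more explicit about the constant identification $C_d=C_{d,1}^{(2)}/d!$ and the polar-coordinate computation, but the structure is the same.
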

\begin{proof}
    Let $A\subset (\ball_{r}^d)^c$ be a Borel set.
    From Lemma \ref{densityofintensitymeasure} we have that
    \begin{align*}
        L_{t,R}(A)
        &= \int_A \norm{x}^{-d-1} \Bigl[ \frac{C_{d,1}^{(2)}}{d!} + O \Bigl( \frac{R^2}{\norm{x}^2}\Bigr)  \Bigr] \dint x
        = M(A) + \int_A \norm{x}^{-d-1} O \Bigl( \frac{R^2}{\norm{x}^2} \Bigr)  \dint x .
    \end{align*}
    Thus
    \begin{align*}
        \lvert L_{t,R}(A) - M(A) \rvert
        %= R^2 O \left( \int_{A} \norm{x}^{-d-3} \dint x \right)
        \leq C R^2 \int_{(\ball_{r}^d)^c} \norm{x}^{-d-3} \dint x 
        = C R^2 r^{-3} 
        = C t^{-\frac{2d}{d+1}} r^{-3}.
    \end{align*}
    Since this bound is independent from the choice of the set $A$, by taking the supremum over all $A\subset (\ball_{r}^d)^c$ we bound the total variation distance by a big $O$ of $R^2$, which goes to zero. 
    The proof is thus complete.
\end{proof}

%%%%%%%%%%%%%%%%%%%%%%%%%%%%%
\section{Convergence of the point process and its convex hull}\label{sec:ConvPPandConv}
%%%%%%%%%%%%%%%%%%%%%%%%%%%%%

%%%%%%%%%%%%%
\subsection{Convergence in Kantorovich–Rubinstein distance of the (restricted) intersection process}
%%%%%%%%%%%%%

In this section we show that the restriction of the intersection process $\Xi_{t,R}$ to the complement of a ball $\ball_r^d$ with radius $r>0$ converges in Kantorovich–Rubinstein distance to the restriction of the Poisson point process $\zeta$. Let us first introduce, with a geometric point of view, the Kantorovich–Rubinstein distance between simple point processes in $\RR^d$. For a definition applying to a more general setting we refer the reader to Section 2.5 of \cite{decreusefond2016functional} where this distance is introduced with a functional point of view. For two discrete point sets $S_1$ and $S_2$ in $\RR^d$ we say that the total variation distance between them is the quantity 
\[ d_{\mathrm{TV}} (S_1,S_2) := \max \bigl( \lvert S_1\setminus S_2 \rvert , \lvert S_2\setminus S_1 \rvert \bigr) , \]
where $\lvert S_i\setminus S_j \rvert$ denotes the number of points of $S_i$ which does not belong to $S_j$, possibly infininity. Note that this definition corresponds to the one used in the previous section when the sets $S_1$ and $S_2$ are considered as counting measures.
Now, consider two simple point processes $X_1$ and $X_2$ in $\RR^d$. Here, as in the rest of the article, we identify a simple point process with its support, which is a random discrete subset of $\RR^d$. The Kantorovich–Rubinstein distance between $X_1$ and $X_2$ is defined as
\[ d_{\mathrm{KR}} (X_1,X_2) := \inf_{(Y_1,Y_2)\in\Sigma (X_1,X_2)} \EE d_{\mathrm{TV}} (Y_1,Y_2) , \]
where $\Sigma (X_1,X_2)$ denotes the set of couplings of $X_1$ and $X_2$, i.e., the set of pairs $(Y_1,Y_2)$ such that $X_i$ and $Y_i$ have the same distribution, $i\in\{1,2\}$. Informally speaking, the Kantorovich–Rubinstein distance measures by how many points $X_1$ and $X_2$ differ on average, if we couple them optimally.

We want to use this distance to compare the intersection point process $\Xi_{t,R}$ with the Poisson point process $\zeta$. The first consists almost surely of finitely many points, while the second is almost surely infinite. Therefore their Kantorovich–Rubinstein distance is always infinite for any $t$.
Thus, in order to obtain a meaningful result we will consider the restriction of both point processes to the complement of a ball with positive radius. Doing so, we ensure that both point processes are almost surely finite.

\begin{theorem} \label{convKR}
    Assume that $0<R=t^{-d/(d+1)}<r<1$. Then, we have that
    \begin{equation*}
        d_{\mathrm{KR}}(\Xi_{t,R}\mres{(\ball_{r}^{d})^c}, \zeta\mres{(\ball_{r}^d})^c )
        \leq C t^{-\frac{1}{d+1}} \ln(t) r^{-3} ,
    \end{equation*}
    where $C$ is a positive constant which depends only on $d$.
    In particular
    \begin{equation}\label{eq:July13b}
        \lim_{t \to \infty} d_{\mathrm{KR}}(\Xi_{t,R}\mres{(\ball_{r}^{d})^c}, \zeta\mres{(\ball_{r}^d})^c ) = 0.
    \end{equation}
\end{theorem}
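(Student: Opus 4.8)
The plan is to interpose a Poisson point process having the \emph{same} intensity measure as $\Xi_{t,R}\mres(\ball_r^d)^c$. Write $L_{t,R}':=L_{t,R}\mres(\ball_r^d)^c$, which is a finite measure because $R<r$, and let $\mathrm{Po}(L_{t,R}')$ be a Poisson point process with intensity measure $L_{t,R}'$. By the triangle inequality for $d_{\mathrm{KR}}$,
\begin{align*}
    d_{\mathrm{KR}}\bigl(\Xi_{t,R}\mres(\ball_r^d)^c,\zeta\mres(\ball_r^d)^c\bigr)
    &\leq d_{\mathrm{KR}}\bigl(\Xi_{t,R}\mres(\ball_r^d)^c,\mathrm{Po}(L_{t,R}')\bigr)
    \\&\qquad+ d_{\mathrm{KR}}\bigl(\mathrm{Po}(L_{t,R}'),\zeta\mres(\ball_r^d)^c\bigr).
\end{align*}
The second summand is harmless: coupling two Poisson processes through the superposition of their common part and two independent residual parts gives $d_{\mathrm{KR}}(\mathrm{Po}(\nu_1),\mathrm{Po}(\nu_2))\leq 2\,d_{\mathrm{TV}}(\nu_1,\nu_2)$ for finite measures $\nu_1,\nu_2$; hence, by Corollary \ref{totalvariation}, it is at most $Ct^{-2d/(d+1)}r^{-3}$, which is of smaller order than the asserted bound.

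The heart of the matter is the first summand. Note that
\[
\Xi_{t,R}\mres(\ball_r^d)^c=\frac{1}{d!}\sum_{(H_1,\dots,H_d)\in\eta_{t,R,\neq}^d}\ind{H_1\cap\dots\cap H_d\in(\ball_r^d)^c}\,\delta_{H_1\cap\dots\cap H_d}
\]
is a Poisson $U$-statistic point process of order $d$, driven by $\eta_{t,R}$, which is almost surely finite and simple. The plan is to apply the abstract functional Poisson approximation theorem on Poisson spaces from \cite{decreusefond2016functional}, which is tailored to point processes of exactly this form and which produces a bound in the Kantorovich--Rubinstein distance. That theorem bounds $d_{\mathrm{KR}}(\Xi_{t,R}\mres(\ball_r^d)^c,\mathrm{Po}(L_{t,R}'))$ by a finite sum of integral functionals of the intensity measure $t\mu_{d-1}$ and of the kernel $\ind{H_1\cap\dots\cap H_d\in(\ball_r^d)^c}$. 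Geometrically, up to the combinatorial constants of \cite{decreusefond2016functional}, each such functional is a weighted expected number of pairs --- and, in the second-order term, triples --- of $d$-tuples of hyperplanes of $\eta_{t,R}$ sharing a prescribed number $j\in\{1,\dots,d-1\}$ of common hyperplanes, with the constraint that all the induced intersection points lie in $(\ball_r^d)^c$. The ``diagonal'' contributions, coming from a single $d$-tuple and comparable with $L_{t,R}((\ball_r^d)^c)$ (which is of order $r^{-1}$), cancel against $L_{t,R}'$ in the Stein identity, as is usual when the $U$-statistic kernel is an indicator.

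It then remains to estimate these overlap functionals. For a pair of $d$-tuples sharing exactly $j$ hyperplanes, the $j$ common hyperplanes intersect in an affine flat $E\in A(d,d-j)$, and the two induced intersection points are $E\cap F_1$ and $E\cap F_2$, where $F_1,F_2\in A(d,j)$ are spanned by the two families of $d-j$ non-shared hyperplanes. The plan is to use the Blaschke--Petkantschin formula \eqref{eq:B-P} to integrate out $E$, then Lemma \ref{density} --- in both of its regimes, according to whether $E$ meets $\ball_R^d$ or not --- to evaluate the integrals over the hyperplanes incident to $E$ and over those spanning $F_1$ and $F_2$, and finally Lemma \ref{lem:intersection} (the higher-dimensional asymptotic Huygens principle) to bound the probability that $E\cap F_1$ and $E\cap F_2$ fall outside $\ball_r^d$. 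Tracking the powers of $t$, of $R=t^{-d/(d+1)}$ and of $r$, the dominant contribution comes from a pair of $d$-tuples sharing a \emph{single} hyperplane ($j=1$): $2d-1$ hyperplanes are involved, and the requirement that both intersection points lie at distance at least $r$ from the origin --- handled through a radial integration over the shell between the cut-off scale $R$ and the scale $r$ --- reduces the naive estimate $t^{2d-1}R^{2d-1}=(tR)^{2d-1}$ by two further powers of $R$, down to the order $t^{2d-1}R^{2d+1}$ up to logarithmic and polynomial-in-$r^{-1}$ factors. Since $t^{2d-1}R^{2d+1}=t^{-1/(d+1)}$, this yields a term of the announced order $Ct^{-1/(d+1)}\ln(t)\,r^{-3}$, while the contributions with $j\geq 2$, as well as the triple term, are of the same or of smaller order. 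Combining with the bound for the second summand and letting $t\to\infty$ with $r>0$ fixed proves \eqref{eq:July13b}.

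I expect the main obstacle to be precisely this last step: the asymptotic evaluation of the overlap functionals, uniformly in $r$. It calls for iterated Blaschke--Petkantschin-type reductions to peel off the shared flat, careful use of the two-regime asymptotics of Lemma \ref{density}, the Huygens-type tail estimate of Lemma \ref{lem:intersection}, and attentive bookkeeping of where the logarithmic factor and the powers of $r$ enter the final bound.
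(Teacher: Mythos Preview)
Your plan is essentially the paper's proof. A few minor calibrations: the paper applies Theorem~3.1 of \cite{decreusefond2016functional} directly to compare $\Xi_{t,R}\mres(\ball_r^d)^c$ with $\zeta\mres(\ball_r^d)^c$, and that theorem already produces the sum $d_{\rm TV}(L_{t,R}\mres(\ball_r^d)^c,M\mres(\ball_r^d)^c)+\frac{2^{d+1}}{d!}\rho_{t,R}(r)$, so your intermediate Poisson process $\mathrm{Po}(L_{t,R}')$ is not needed. The error term $\rho_{t,R}(r)=\max_{1\le\ell\le d-1}I_{\ell,t,R}(r)$ involves only the pair-type overlap integrals (your ``triples'' do not arise here), and the paper's Lemma~\ref{errorfunction} bounds it by $Ct^{-1/(d+1)}\ln(t)\,r^{-2}$, not $r^{-3}$; the $r^{-3}$ in the final statement comes from the total-variation term. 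Your sketch of how to control $I_{\ell,t,R}(r)$---Blaschke--Petkantschin on both the shared and non-shared blocks of hyperplanes, Lemma~\ref{density} for the resulting $J$-integrals, and Lemma~\ref{lem:intersection} for the tail probability---is exactly what the paper does; the one ingredient you did not name is the parallelogram identity $(E_o+x_E)\cap(F_o+x_F)=(E_o\cap(F_o+x_F))+((E_o+x_E)\cap F_o)$, which is what reduces the affine--affine intersection to two linear--affine intersections so that Lemma~\ref{lem:intersection} applies.
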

\begin{proof}
    Theorem 3.1 of \cite{decreusefond2016functional} applied to our setting says that
    \[ d_{\mathrm{KR}}(\Xi_{t,R}\mres{(\ball_{r}^{d})^c}, \zeta\mres{(\ball_{r}^d})^c )
    \leq d_{\rm TV}(L_{t,R}\mres{(\ball_{r}^d)^c},M\mres{(\ball_{r}^d)^c}) + \frac{2^{d+1}}{d!} \rho_{t,R}(r) , \]
    where $\rho_{t,R}(r)$ is defined (and bounded) in the Lemma \ref{errorfunction} below.
    Corollary \ref{totalvariation} and Lemma \ref{errorfunction} provide bounds for each of the two summands of the right hand side, and thus we get the bound
    \[ d_{\mathrm{KR}}(\Xi_{t,R}\mres{(\ball_{r}^{d})^c}, \zeta\mres{(\ball_{r}^d})^c )
        \leq C t^{\frac{-2d}{d+1}} r^{-3} + C t^{\frac{-1}{d+1}} \ln(t) r^{-2}  \]
    from which Theorem \ref{convKR} follows.
    Note that the assumptions $t^{-d/(d+1)}<r$ of Corollary \ref{totalvariation} and $t\geq 1$ of Lemma \ref{errorfunction} are both satisfied.
    Therefore the proof is complete.
\end{proof}

In the next lemma we deal with the error term $\rho_{t,R}$, which appeared in \eqref{eq:July13b} in the proof of the previous theorem.

\begin{lemma} \label{errorfunction}
    For $d \geq 2$, $r>0$, $t>0$, and $R>0$, set 
    \begin{align}
        \rho_{t,R}(r)
        &:=\max_{1\leq \ell \leq d-1} I_{\ell,t,R} (r) ,
    \end{align}
    where $I_{\ell,t,R}(r) $ denotes the integral
    \begin{align*}
        I_{\ell,t,R}(r):=& \int_{[\ball_{R}^{d}]^{\ell}}t^{\ell}\Bigg(t^{d-\ell}\int_{[\ball_{R}^d]^{d-\ell}}\ind{\norm{H_{1}\cap \ldots \cap H_d}\geq r} \mu_{d-1}^{\otimes (d-\ell)} (\dint (H_{\ell+1},\ldots, H_d))\Bigg)^2
        \\&\hspace{7cm}\times\mu_{d-1}^{\otimes \ell} (\dint( H_1,\ldots,H_{\ell})) .
    \end{align*}
    Then, for $R=t^{-\frac{d}{d+1}}$ and $t\geq 1$, we have that
    \begin{equation*}
        \rho_{t,R}(r) \leq C t^{-\frac{1}{d+1}} \ln(t) r^{-2} ,
    \end{equation*}
    where $C$ is a positive constant depending only on $d$.
\end{lemma}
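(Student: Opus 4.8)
The plan is to fix $\ell\in\{1,\dots,d-1\}$, bound $I_{\ell,t,R}(r)$, and then take the maximum over $\ell$. The first step is to linearise the combinatorial structure of $I_{\ell,t,R}(r)$: write the square of the inner integral as a double integral over two independent $(d-\ell)$-tuples of hyperplanes, and apply the affine Blaschke–Petkantschin formula \eqref{eq:B-P} three times — once to the tuple $(H_1,\dots,H_\ell)$ and once to each of the two completing $(d-\ell)$-tuples. The first application introduces a flat $E\in A(d,d-\ell)$ with $H_1\cap\dots\cap H_\ell=E$ for $\mu_{d-1}^{E}$-almost every choice of $H_i\supset E$, the other two introduce flats $F,F'\in A(d,\ell)$ with $H_{\ell+1}\cap\dots\cap H_d=F$ and $H'_{\ell+1}\cap\dots\cap H'_d=F'$ almost everywhere. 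Under these substitutions the indicator factors become $\ind{\norm{E\cap F}\geq r}$ and $\ind{\norm{E\cap F'}\geq r}$, which no longer depend on the individual hyperplanes, while the leftover integrals over the hyperplanes through $E$, $F$ and $F'$ are precisely $J_{d,d-\ell,d-\ell+1}(E,R)$, $J_{d,\ell,\ell+1}(F,R)$ and $J_{d,\ell,\ell+1}(F',R)$ from Lemma~\ref{density}. Using the bounds $J_{d,k,a}(\,\cdot\,,R)\leq C\min(1,(R/s_{\cdot})^{d+1})$, which follow from Lemma~\ref{density} since the bracket $C^{(2)}_{d-k,a}+O(R^2/s^2)$ is $\leq C$ once $s\geq R$, the task reduces to showing
\begin{equation*}
    t^{2d-\ell}\int_{A(d,d-\ell)}\min\!\Bigl(1,\tfrac{R^{d+1}}{s_E^{d+1}}\Bigr)\,\Psi_{t,R}(E,r)^2\,\mu_{d-\ell}(\dint E)\leq C\,t^{-\ell/(d+1)}\ln(t)\,r^{-2},
\end{equation*}
where $\Psi_{t,R}(E,r):=\int_{A(d,\ell)}\ind{\norm{E\cap F}\geq r}\min(1,(R/s_F)^{d+1})\,\mu_\ell(\dint F)$; since $\ell\geq1$ this is stronger than the claim after taking $\max_\ell$.

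The heart of the argument is the estimate
\begin{equation*}
    \Psi_{t,R}(E,r)\leq C\,R^{d-\ell}\,\frac{\max(R,s_E)}{\max(r,s_E)} .
\end{equation*}
To prove it, write $E=E_o+x_E$ with $E_o\in G(d,d-\ell)$ and $x_E\in E_o^\perp$, $\norm{x_E}=s_E$, disintegrate $\mu_\ell$ into a direction $F_o\in G(d,\ell)$ and an offset in $F_o^\perp$, and for $\nu_\ell$-a.e.\ $F_o$ change variables from that offset to the intersection point $y=E\cap F\in E$. This is an affine bijection $E\to F_o^\perp$ with Jacobian $J_{E_o,F_o}:=\lvert\det(\mathrm{proj}_{F_o^\perp}|_{E_o})\rvert\leq1$, and $s_F=\norm{\mathrm{proj}_{F_o^\perp}y}$. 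Swapping the order of integration gives $\Psi_{t,R}(E,r)=\int_E\ind{\norm y\geq r}h(y)\,\dint y$ with $h(y)=\int_{G(d,\ell)}J_{E_o,F_o}\min(1,(R/\norm{\mathrm{proj}_{F_o^\perp}y})^{d+1})\,\nu_\ell(\dint F_o)$. The geometric point is that $\norm{\mathrm{proj}_{F_o^\perp}y}$ is small — which is what makes the $\min$-factor large — only if the direction of $y_0:=\mathrm{proj}_{E_o}y$ is nearly contained in $F_o$, and then $\mathrm{proj}_{F_o^\perp}|_{E_o}$ has a correspondingly small singular value; quantitatively $J_{E_o,F_o}\leq\norm{\mathrm{proj}_{F_o^\perp}(y_0/\norm{y_0})}\leq(\norm{\mathrm{proj}_{F_o^\perp}y}+s_E)/\norm{y_0}$. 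Inserting this, and using that for uniform $F_o\in G(d,\ell)$ the quantity $\norm{\mathrm{proj}_{F_o^\perp}y}^2/\norm y^2$ has a $\mathrm{Beta}(\tfrac{d-\ell}2,\tfrac\ell2)$ distribution, a one–variable computation gives $h(y)\leq C(R/\norm y)^{d-\ell}(R+s_E)/\norm y$ on the range $\norm y\geq\max(r,2s_E)$, while on the bounded remaining range $r\leq\norm y<2s_E$ one keeps the crude bound $h(y)\leq C(R/\norm y)^{d-\ell}$. Integrating over $y\in E$, with the elementary estimate $\int_E\ind{\norm y\geq a}\norm y^{-(d-\ell+1)}\,\dint y\leq C/a$, yields the displayed bound on $\Psi_{t,R}$.

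Finally, plugging the estimate for $\Psi_{t,R}$ into the outer integral and disintegrating $\mu_{d-\ell}$ into a direction in $G(d,d-\ell)$ and an $\ell$–dimensional offset reduces it to a multiple of $\int_0^\infty\min(1,(R/s)^{d+1})\frac{\max(R,s)^2}{\max(r,s)^2}s^{\ell-1}\,\dint s$. Splitting this at $s=R$ and $s=r$, each of the three pieces is $\leq C R^{\ell+2}r^{-2}$, except that for $\ell=d-1$ the middle piece contains $\int_R^r s^{-1}\,\dint s=\ln(r/R)$, which is $\leq C\ln t$ since $R=t^{-d/(d+1)}$; this is the source of the logarithm. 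Hence the outer integral is $\leq C R^{2(d-\ell)}R^{\ell+2}r^{-2}\ln t$ and $I_{\ell,t,R}(r)\leq C\,t^{2d-\ell}R^{2d-\ell+2}r^{-2}\ln t$; substituting $R=t^{-d/(d+1)}$ one checks that $t^{2d-\ell}R^{2d-\ell+2}=t^{-\ell/(d+1)}\leq t^{-1/(d+1)}$, which completes the proof. I expect the main obstacle to be the estimate on $\Psi_{t,R}$, and in particular the Jacobian bound $J_{E_o,F_o}\leq(\norm{\mathrm{proj}_{F_o^\perp}y}+s_E)/\norm{y_0}$: without tying $J_{E_o,F_o}$ to the same angular quantity that forces $s_F$ to be small, one loses a power of $R/\norm y$ in $h$ and the resulting integral over $E$ only converges logarithmically at infinity, which is too weak.
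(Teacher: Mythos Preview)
Your argument is correct and reaches exactly the same final bound $I_{\ell,t,R}(r)\leq C\,t^{-\ell/(d+1)}\ln(t)\,r^{-2}$ as the paper, with the logarithm appearing for the same reason (the borderline exponent $s^{\ell-d}$ in the middle piece when $\ell=d-1$). The overall architecture also matches: apply the Blaschke--Petkantschin formula to reduce to flats $E$ and $F$, invoke Lemma~\ref{density} to bound the residual $J$-integrals by $\min(1,(R/s)^{d+1})$, estimate the inner integral (your $\Psi_{t,R}$, the paper's $K$), and then compute a one-dimensional radial integral.

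The genuine difference lies in how the inner integral is controlled. The paper uses a \emph{parallelogram identity}
\[
(E_o+x_E)\cap(F_o+x_F)=\bigl(E_o\cap(F_o+x_F)\bigr)+\bigl((E_o+x_E)\cap F_o\bigr),
\]
which decouples the two offsets and reduces the indicator $\ind{\norm{E\cap F}\geq r}$ to two indicators of the form ``linear flat meets affine flat far from the origin''; each of these is then handled by the Huygens-type Lemma~\ref{lem:intersection}. You instead change variables directly to the intersection point $y=E\cap F\in E$, picking up the Jacobian $J_{E_o,F_o}=\lvert\det(\mathrm{proj}_{F_o^\perp}|_{E_o})\rvert$, and exploit the inequality $J_{E_o,F_o}\leq\norm{\mathrm{proj}_{F_o^\perp}v}$ for any unit $v\in E_o$ (valid because all singular values are $\leq1$) to tie the Jacobian to the very quantity $s_F=\norm{\mathrm{proj}_{F_o^\perp}y}$ that governs the weight. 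The resulting bounds on $\Psi_{t,R}$ and on the paper's $K$ are in fact equivalent up to constants: your $\max(R,s_E)/\max(r,s_E)$ and the paper's $\min(1,s_F/r)+R/r$ agree in each of the regimes $s\leq R$, $R<s\leq r$, $s>r$. Your route avoids the auxiliary Lemma~\ref{lem:intersection} entirely, at the cost of the Jacobian--singular-value observation and a slightly more delicate one-variable Beta computation; the paper's route is more modular but needs the separate preparatory lemma.
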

\begin{proof} 
    Let $\ell \in \{1,\ldots,d-1\}$ and $r>0$ be fixed.
    We want to show that $I_{\ell,t,R}(r)$ tends to $0$, as $t\to\infty$ and $R=t^{-\frac{d}{d+1}}\to 0$.
    The constant $C$ appearing below depends on $d$ and $\ell$, and is independent of everything else. Its specific value varies from line to line and is irrelevant for the proof.

    \paragraph{First part of the proof: Make use of Lemma $\ref{density}$.} 
    In this first part we use integral geometric formulas in order to rewrite the integral $I_{\ell,t,R}(r)$ in term of integrals studied in Lemma \ref{density}.
    
    Let $E\in A(d,\ell)$ (resp. $F\in A(d,d-\ell)$) denotes the intersection of the affine hyperplanes involved in the inner (resp.\ outer) integral,
    that is 
    \[ E := H_{\ell+1}\cap\cdots\cap H_d\qquad\text{and}\qquad
     F := H_{1}\cap\cdots\cap H_\ell . \]
    Applying twice the Blaschke-Petkantschin formula \eqref{eq:B-P} permits us to rewrite our integral by \enquote{pivoting} the hyperplanes $H_i$'s around $E$ and $F$.
    It gives that $I_{\ell,t,R}(r)$ equals
    \begin{equation*}
    \begin{split}
        & t^{2d-\ell} \int_{A(d,d-\ell)} \int_{(A(F,d-1) \cap [\ball_{R}^{d}] )^{\ell}} \Bigg( \int_{A(d,\ell)} \int_{(A(E,d-1) \cap [\ball_{R}^d])^{d-\ell}} \ind{\norm{E\cap F}\geq r} 
        \\&\qquad\qquad\qquad\qquad\qquad\times [H_{\ell+1},\ldots,H_d]^{\ell+1} (\mu_{d-1}^E)^{\otimes (d-\ell)} (\dint (H_{\ell+1},\ldots, H_d)) \mu_{\ell} (\dint E) \Bigg)^2
        \\&\qquad\qquad\qquad\qquad\qquad\times [H_{1},\ldots,H_\ell]^{d-\ell+1} (\mu_{d-1}^F)^{\otimes \ell} (\dint( H_1,\ldots,H_{\ell})) \mu_{d-\ell}(\dint F)  .
    \end{split}
    \end{equation*}
    Note that we also took out of the integral the factors $t^\ell$ and $t^{d-\ell}$.
    We will now make more apparent the structure of this expression by using the following notation, which is similar to the one used in Lemma \ref{density}:
    \begin{equation*}
    \begin{split}
        J(E)
        &:=\int_{(A(E,d-1)\cap [\ball_{R}^d])^{d-\ell}}[H_{\ell+1},\ldots,H_{d}]^{\ell+1} 
        (\mu_{d-1}^{E})^{\otimes (d-\ell)} (\dint (H_{\ell+1}, \ldots, H_{d})), 
    \end{split}
    \end{equation*}
    and
    \begin{equation*}
    \begin{split}
        J(F)
        &:=\int_{(A(F,d-1)\cap [\ball_{R}^d])^{\ell}}[H_{1},\ldots,H_{\ell}]^{d-\ell+1}
        (\mu_{d-1}^{F})^{\otimes \ell} (\dint (H_{1}, \ldots, H_{\ell})) .
    \end{split}
    \end{equation*}
    With this notation and using Fubini's theorem the investigated integral takes the form
    \begin{equation*}
    \begin{split}
        & t^{2d-\ell} \int_{A(d,d-\ell)} J(F) \Bigg( \int_{A(d,\ell)} \ind{\norm{E\cap F}\geq r}  J(E) \mu_{\ell}(\dint E) \Bigg)^2 \mu_{d-\ell}(\dint F)  .
    \end{split}
    \end{equation*}
    Lemma \ref{density} gives us precise approximations of $J(F,R)$ and $J(E,R)$ in terms of the distances between the origin and the flats $F$ and $E$, respectively.
    In order to have a hand on these distances we decompose the affine flat $E\in A(d,\ell)$ (resp.\ $F\in A(d,d-\ell)$) into a sum a linear flat $E_o\in G(d,\ell)$ (resp.\ $F_o\in G(d,d-\ell)$) and a translation vector $x_E\in E_o^\perp$ (resp. $x_F\in F_o^\perp$) in the orthogonal complement of that flat.
    The aforementioned distances are simply the Euclidean norms of the vectors $x_E$ and $x_F$.
    The quantity of interest thus becomes
    \begin{equation*}
    \begin{split}
        & t^{2d-\ell} \int_{G(d,d-\ell)} \int_{F_o^\perp} J(F_o+x_F) \Bigg( \int_{G(d,\ell)} \int_{E_o^\perp} \ind{\norm{(E_o+x_E)\cap (F_o+x_F)}\geq r} 
        \\&\hspace{5cm}\times J(E_o+x_E) \dint x_E \, \nu_{\ell}(\dint E_o) \Bigg)^2 \dint x_F \, \nu_{d-\ell}(\dint F_o)  .
    \end{split}
    \end{equation*}
    It is easy to see that $J(E) = C \ind{\norm{x_E}\leq R}$ if $\ell=d-1$ and  $J(F) = C \ind{\norm{x_F}\leq R}$ if $\ell=1$.
    For the other cases we use  Lemma $\ref{density}$ which provides us for $\ell \geq 2$ with 
    %with $k=\dim E =\ell$, $a=\ell+1$ (and thus $d-k+a=d+1$) gives
    \begin{equation*}
        J(E) 
        = \ind{\norm{x_E} <R} C_{d-\ell,\ell+1}^{(1)} 
        + \ind{\norm{x_E}\geq R} \Bigl( \frac{R}{\norm{x_E}} \Bigr)^{d+1} \Bigl[ C_{d-\ell,\ell+1}^{(2)} + O \Bigl( \frac{R^2}{\norm{x_E}^2}\Bigr)  \Bigr] ,
    \end{equation*}
    and for $\ell\leq d-2$ with
    %with $k=\dim F =d-\ell$, $a=d-\ell+1$ (and thus $d-k+a=d+1$) gives
    \begin{equation*}
        J(F) 
        = \ind{\norm{x_F} <R} C_{\ell,d-\ell+1}^{(1)} 
        + \ind{\norm{x_F}\geq R} \Bigl( \frac{R}{\norm{x_F}} \Bigr)^{d+1} \Bigl[ C_{\ell,d-\ell+1}^{(2)} + O \Bigl( \frac{R^2}{\norm{x_F}^2}\Bigr)  \Bigr] .
    \end{equation*}
    Here, we only need to consider upper bounds, so we will keep from these two approximations the following statements which hold for all $\ell \in \{1,\ldots,d-1\}$:
    \begin{equation*}
        J(E) 
        \leq C \min \Bigl( 1 , \frac{R}{\norm{x_E}} \Bigr)^{d+1} 
        \qquad \text{and}\qquad
        J(F) 
        \leq C \min \Bigl( 1 , \frac{R}{\norm{x_F}} \Bigr)^{d+1} ,
    \end{equation*}
    where $C$ is some large enough constant depending only on $d$ and $\ell$.
    Plugging these bounds in the integral above yields
    \begin{equation} \label{eq:I-Bound}
        I_{\ell,t,R}(r)
        \leq C \, t^{2d-\ell} \int_{G(d,d-\ell)} \int_{F_o^\perp} \min \Bigl( 1 , \frac{R}{\norm{x_F}} \Bigr)^{d+1} K^2 \, \dint x_F \, \nu_{d-\ell}(\dint F_o)  ,
    \end{equation}
    where
    \begin{equation*}
    \begin{split}
     K &= K(d,\ell,F_o,x_F,r)
        \\ &:= \int_{G(d,\ell)} \int_{E_o^\perp}
        \ind{\norm{(E_o+x_E)\cap (F_o+x_F)}\geq r} \min \Bigl( 1 , \frac{R}{\norm{x_E}} \Bigr)^{d+1} \dint x_E \, \nu_{\ell}(\dint E_o)    .
    \end{split}
    \end{equation*}
    
    \paragraph{Second part of the proof: Bound $K$.}
    We will now focus on bounding the newly introduced term $K$. One difficulty in estimating this term comes from the fact that it is hard to get a good hand on the norm of the intersection of the affine flats $(E_o+x_E)$ and $(F_o+x_F)$.
    In order to overcome this problem, we will reduce it to intersections involving one linear flat and one affine flat.
    As we will see, the norm of such intersections are much more easily handled.
    Observe that the origin and the points $E_o\cap (F_o+x_F)$, $(E_o+x_E)\cap (F_o+x_F)$ and $(E_o+x_E)\cap (F_o+x_F)$ are the vertices of a parallelogram.
    Thus we have that
    \[ (E_o+x_E)\cap (F_o+x_F) 
    = (E_o \cap (F_o+x_F)) + ((E_o+x_E)\cap F_o) . \]
    Therefore, using the triangle inequality and the fact that if a sum of two positive real numbers is bigger than $r$ then at least one of the two is bigger than $r/2$, we obtain the following bound of the indicator function which appears in the term $K$:
    \begin{equation*}
    \begin{split}
        \ind{\norm{(E_o+x_E)\cap (F_o+x_F)}\geq r} 
        &\leq \ind{\norm{E_o \cap (F_o+x_F)}\geq \frac{r}{2}} 
        \\&\qquad + \ind{\norm{(E_o+x_E)\cap F_o} \geq \frac{r}{2}} .
    \end{split}
    \end{equation*}
    Thus we have that $K\leq K_1 + K_2$, where
    \begin{equation*}
    \begin{split}
         K_1 &= K_1(d,\ell,F_o,x_F,r)
        \\ & := \int_{G(d,\ell)} \int_{E_o^\perp}
        \ind{\norm{E_o \cap (F_o+x_F)}\geq \frac{r}{2}}
        \min \Bigl( 1 , \frac{R}{\norm{x_E}} \Bigr)^{d+1} \dint x_E \, \nu_{\ell}(\dint E_o) ,
    \end{split}
    \end{equation*}
    and
    \begin{equation*}
    \begin{split}
         K_2 &= K_2(d,\ell,F_o,r)
        \\& := \int_{G(d,\ell)} \int_{E_o^\perp}
        \ind{\norm{(E_o+x_E)\cap F_o} \geq \frac{r}{2}}
        \min \Bigl( 1 , \frac{R}{\norm{x_E}} \Bigr)^{d+1} \dint x_E \, \nu_{\ell}(\dint E_o) .
    \end{split}
    \end{equation*}
    Let us consider $K_1$. 
    By using spherical coordinates in the $(d-\ell)$-dimensional space $E_o$ and then Fubini's theorem, we get that $K_1 = K_{1,1}\, K_{1,2} $, with
    \begin{equation*} 
        K_{1,1} 
        = K_{1,1} (d,\ell,F_o,x_F,r)
        = \int_{G(d,\ell)} \ind{\norm{E_o \cap (F_o+x_F)}\geq \frac{r}{2}} \nu_{\ell}(\dint E_o)
    \end{equation*}
    and 
    \begin{equation*}
        K_{1,2}
        = K_{1,2} (d,\ell)
        %= \int_0^\infty \min \Bigl( 1 , \frac{R}{s} \Bigr)^{d+1} \omega_{d-\ell} s^{d-\ell-1} \dint s .
        = C \int_0^\infty \min \Bigl( 1 , \frac{R}{s} \Bigr)^{d+1} s^{d-\ell-1} \dint s .
    \end{equation*}
    The integral $K_{1,1}$ is the probability that the random linear space $E_o$ intersects the fixed flat $F_o+x_F$ at a distance from the origin greater than $r/2$.
    Such a probability is estimated in Lemma \ref{lem:intersection}, from which we get that
    \begin{equation*}
        K_{1,1} \leq C \min \Bigl( 1 , \frac{\norm{x_F}}{r} \Bigr) .
    \end{equation*}
    We evaluate also the integral $K_{1,2}$, namely
    \begin{equation*}
    \begin{split}
        K_{1,2}
        & =  \int_0^R s^{d-\ell-1} \omega_{d-\ell} \dint s 
        + \int_R^\infty R^{d+1} s^{-\ell-2} \omega_{d-\ell} \dint s
        %\\& =  \frac{\omega_{d-\ell}}{d-\ell} R^{d-\ell}
        %+ \frac{\omega_{d-\ell}}{\ell+1} R^{d+1} R^{-\ell-1}
        %=  \frac{\omega_{d-\ell} (d+1)}{(d-\ell)(\ell+1)} R^{d-\ell} .
        =  C R^{d-\ell} .
    \end{split}
    \end{equation*}
    Note that for the last equality to hold one needs that both exponents $d-\ell-1$ and $-\ell-2$ are distinct from $-1$. This is insured by the condition $1\leq \ell \leq d-1$.
    Putting the pieces together we get the bound
    \begin{equation} \label{eq:K1bound}
        K_1 \leq C \min \Bigl( 1 , \frac{\norm{x_F}}{r} \Bigr) \, R^{d-\ell} .
    \end{equation}
    
    Next, we deal with $K_2$ and
    recall that
    \begin{equation*}
    \begin{split}
         K_2 &= K_2(F_o)
         = \int_{G(d,\ell)} \int_{E_o^\perp}
        \ind{\norm{(E_o+x_E)\cap F_o} \geq \frac{r}{2}}
        \min \Bigl( 1 , \frac{R}{\norm{x_E}} \Bigr)^{d+1} \dint x_E \, \nu_{\ell}(\dint E_o) .
    \end{split}
    \end{equation*} 
    In this integral we consider the intersection of a fixed linear space $F_o\in G(d,d-\ell)$ with a \enquote{moving} affine space $E_o+x_E$.
    If we compare with the definition of $K_{1,1}$ we see some similarities but also that the roles of which space is fixed and which one is moving are interchanged.
    As above we will use Lemma \ref{lem:intersection} to bound this integral, but first we need to rewrite it in a more convenient form.
    Using spherical coordinates in the $(d-\ell)$-dimensional space $E_o^\perp$, Fubini's theorem and \enquote{parametrising} the space $\{ E_o + s u_E \mid E_o \in G(d,\ell) \, , \, u_E \in \sphere^{d-1} \cap E_o^{\perp} \}$ by $SO_d$ we can rewrite the integral as 
    \begin{equation*}
        K_2
        = C \int_0^\infty \int_{SO_d} 
        \ind{\norm{\theta E_s \cap F_o} \geq \frac{r}{2}}
        \nu (\dint \theta) \ \min \Bigl( 1 , \frac{R}{s} \Bigr)^{d+1} s^{d-\ell-1} \dint s ,
    \end{equation*} 
    where $E_s$ denotes an arbitrarily chosen $\ell$-dimensional flat at distance $s$ from the origin, $s>0$. 
    The inner integral is the probability that a random flat $\theta E_s$ and a fixed linear space $F_o$ have their intersection point with a norm at least $r/2$.
    Lemma \ref{lem:intersection} tells us that, for a fixed $r$, this is of order at most $\min(1,s/r)$.
    Therefore
    \begin{equation*}
        K_2
        \leq C \int_0^\infty \min \Bigl( 1 , \frac{R}{s} \Bigr)^{d+1} \min\Bigl(1,\frac{s}{r}\Bigr) s^{d-\ell-1} \dint s .
    \end{equation*}  
    Recall that $R=t^{-\frac{d}{d+1}}$ by assumption. Thus, without loss of generality, we may assume that $t$ is large enough to ensure that $R<r$, and therefore
    \begin{align*}
        K_2 
        %
        %& \leq C \left( \int_0^R 1^{d+1} s s^{d-\ell-1} \dint s 
        %+ \int_R^1 \Bigl(\frac{R}{s}\Bigr)^{d+1} s s^{d-\ell-1} \dint s 
        %+ \int_1^\infty \Bigl(\frac{R}{s}\Bigr)^{d+1} 1 s^{d-\ell-1} \dint s \right) 
        %
        \leq C \left( \int_0^R \frac{s^{d-\ell}}{r} \dint s 
        + \int_R^r \frac{R^{d+1} s^{-\ell-1}}{r} \dint s 
        + \int_r^\infty R^{d+1} s^{-\ell-2} \dint s \right) .
        %
        %\\&= C \left( \frac{R^{d-\ell+1}}{d-\ell+1}  
        %+ \frac{R^{d+1}}{\ell} \bigl( R^{-\ell} - 1 \bigr) 
        %+  \frac{R^{d+1}}{\ell+1} \right) .
    \end{align*}
    The two first integrals give a term of order $R^{d-\ell+1} r^{-1} $ and the third a term of order $R^{d+1} r^{-\ell-1}$.
    Since $R<r$ this yields
    \begin{equation} \label{eq:K2bound}
        K_2 \leq 
        % C \max \left( R^{d-\ell+1} , R^{d-\ell+1} , R^{d+1} \right) =
        C \frac{R^{d-\ell+1}}{r}.
    \end{equation}
    Combining the bounds \eqref{eq:K1bound} and \eqref{eq:K2bound} for the terms $K_1$ and $K_2$, we finally get
    \begin{equation*}
        K = K_1 + K_2 
        \leq C \left( \min \Bigl( 1 , \frac{\norm{x_F}}{r} \Bigr) R^{d-\ell} +  \frac{R^{d-\ell+1}}{r} \right) .
    \end{equation*}
    
    \paragraph{Third (and final) part of the proof.}
    We now plug the last bound into \eqref{eq:I-Bound} to conclude that
    \begin{equation} \label{eq:I-Bound-2}
        I_{\ell,t,R}(r)
        \leq C \, t^{2d-\ell} \int_{G(d,d-\ell)} L(F_o) \nu_{d-\ell}(\dint F_o)  ,
    \end{equation}
    where
    \begin{equation*}
        L(F_o)
        := \int_{F_o^\perp} \min \Bigl( 1 , \frac{R}{\norm{x_F}} \Bigr)^{d+1} \left( \min \Bigl( 1 , \frac{\norm{x_F}}{r} \Bigr) R^{d-\ell} +  \frac{R^{d-\ell+1}}{r} \right)^2 \, \dint x_F .
    \end{equation*}
    We use spherical coordinates in the $\ell$-dimensional subspace $F_o^\perp$ to see that
    \begin{align*}
        L(F_o)
        &= C \int_0^\infty \min \Bigl( 1 , \frac{R}{s} \Bigr)^{d+1} \left( \min\Bigl(1,\frac{s}{r}\Bigr) R^{d-\ell} +  \frac{R^{d-\ell+1}}{r} \right)^2 s^{\ell-1} \dint s.
    \end{align*}
    Recall that without loss of generality $R<r$. We split the integral into three parts:
    \begin{align*}
        L(F_o)
        &= C \biggl( \int_0^R 1^{d+1} \Bigl( \frac{s}{r} R^{d-\ell} + \frac{R^{d-\ell+1}}{r} \Bigr)^2 s^{\ell-1} \dint s
        \\&\qquad\qquad\qquad + \int_R^r \Bigl(\frac{R}{s}\Bigr)^{d+1} \Bigl( \frac{s}{r} R^{d-\ell} +  \frac{R^{d-\ell+1}}{r} \Bigr)^2 s^{\ell-1} \dint s 
        \\&\qquad\qquad\qquad + \int_r^\infty \Bigl( \frac{R}{s} \Bigr)^{d+1} \Bigl( R^{d-\ell} +  \frac{R^{d-\ell+1}}{r} \Bigr)^2 s^{\ell-1} \dint s \biggr).
    \end{align*}
    Since $R<\min(1,r)$ we use the inequalities
    \begin{align*}
        \frac{s}{r} R^{d-\ell} + \frac{R^{d-\ell+1}}{r} &\leq 2 \frac{R^{d-\ell+1}}{r} \qquad \text{ for } s\in [0,R] , \\
        \frac{s}{r} R^{d-\ell} + \frac{R^{d-\ell+1}}{r} &\leq 2 \frac{s R^{d-\ell}}{r} \qquad\;\; \text{ for } s\in [R,r] ,\\
        R^{d-\ell} + \frac{R^{d-\ell+1}}{r} &\leq 2 R^{d-\ell} \qquad\;\;\;\;\,\text{ for } s\in [r,\infty) ,
    \end{align*}
    to get the bound
    \begin{align*}
        &L(F_o)
        \leq C \biggl( \int_0^R \frac{R^{2d-2\ell+2}}{r^2} s^{\ell-1} \dint s
        + \int_R^r \frac{R^{3d-2\ell+1}}{r^2} s^{\ell-d} \dint s 
        + \int_r^\infty R^{3d-2\ell+1} s^{\ell-d-2} \dint s  \biggr).
    \end{align*}
    Note that each of the monomials $s^{\ell-1}$, $s^{\ell-d}$ and $s^{\ell-d-2}$ is distinct from $s^{-1}$, except for $s^{\ell-d}$ if $\ell=d-1$.
    In order to avoid distinguishing cases we will use the bound 
    \[ \int_R^r s^{\ell-d} \dint s \leq C R^{\ell-d+1} \ln(R^{-1}) \]
    which is valid for all $\ell\in\{1,\ldots,d-1\}$.
    Thus, we get
    \begin{align*}
        L(F_o)
        &\leq C \bigl( R^{2d-\ell+2} r^{-2}
        + R^{2d-\ell+2} \ln(R^{-1}) r^{-2}
        + R^{3d-2\ell+1} r^{\ell-d-1} \bigr)
        \\&=  C R^{2d-\ell+2} \bigl( r^{-2} + \ln(R^{-1}) r^{-2} + R^{d-\ell-1} r^{\ell-d-1} \bigr)
        \\&\leq  C R^{2d-\ell+2} \ln(R^{-1}) r^{-2}.
    \end{align*}
    We notice that this bound is independent from $F_o$, so when we plug it into \eqref{eq:I-Bound-2} we can ignore the integral over $F_o$.
    Also recall that $R=t^{-\frac{d}{d+1}}$.
    This gives
    \begin{align*}
        I_{\ell,t,R}(r)
        &\leq C \, t^{2d-\ell} R^{2d-\ell+2} \ln(R^{-1}) r^{-2}
        \\& = C \, t^{\frac{(2d-\ell)(d+1)-(2d-\ell+2)d}{d+1}} \ln(t) r^{-2}
        \\& = C \, t^{\frac{2d^2+2d-\ell d -\ell -2d^2+\ell d-2d}{d+1}} \ln(t) r^{-2}
        \\& = C \, t^{-\frac{\ell}{d+1}} \ln(t) r^{-2} .
    \end{align*}
    For $t\geq 1$ and $\ell\in\{1,\ldots,d-1\}$, the last quantity is maximised for $\ell=1$.
    This proves Lemma \ref{errorfunction}.
\end{proof}

%%%%%%%
\subsection{Convergence in distribution of the intersection process}
%%%%%%

Using Theorem \ref{convKR} we are now in a position to state our main theorem. % (which is a reformulation of Theorem \ref{thm:intro} from the introduction).
We prove that under the assumption that $R=t^{-\frac{d}{d+1}}$ the intersection point process $\Xi_{t,R}$ converges in  distribution to a Poisson point process  with power law density function  in $\RR^{d}\setminus \{0\}$. To explain the meaning of this convergence we supply the space $\sfN$ of counting measures on $\RR^{d}\setminus \{0\}$ with the vague topology induced by the mappings $\mu\mapsto\int f\dint\mu$, where $f:\RR^{d}\setminus \{0\}\to\RR$ is a non-negative continuous function with compact support. With this topology $\sfN$ becomes a Polish space and convergence in distribution of point processes refers to weak convergence induced by the vague topology, see \cite[Chapter 16 and Appendix A2]{kallenberg2002foundation}. Since all point processes we deal with are simple (that is, have no multiple points or atoms) the convergence $\xi_n\overset{d}{\to}\xi$ of a sequence of (simple) point processes $\xi_n$ to a simple point process $\xi$ is equivalent to the convergence in distribution of the real-valued random variables $\xi_n(B)$ to $\xi(B)$ for all relatively compact sets $B\subset\RR^d\setminus\{0\}$ satisfying $\xi(\partial B)=0$, where we write $\partial B$ for the boundary of $B$, see \cite[Theorem 16.16]{kallenberg2002foundation}.

\begin{theorem} \label{Poissonapproximation}
    Let $R=t^{-\frac{d}{d+1}}$ and $\zeta$ be a Poisson point process in $\RR^{d}\setminus \{0\}$ whose intensity measure has density function $x \to \frac{C_d}{\norm{x}^{d+1}}$, where $C_d$ is the same as in the previous section and given by \eqref{eq:DefConstCd}.  Then, we have that 
    \[\Xi_{t,R} \xrightarrow{d} \zeta, \quad t \to \infty \]
    in $\RR^{d}\setminus \{0\}.$
\end{theorem}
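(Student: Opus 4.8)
The plan is to obtain the global weak convergence from the quantitative estimate of Theorem \ref{convKR} combined with the characterisation of vague convergence of simple point processes recalled just above the statement. By \cite[Theorem 16.16]{kallenberg2002foundation} it suffices to prove that $\Xi_{t,R}(B)\xrightarrow{d}\zeta(B)$ as $t\to\infty$ for every relatively compact Borel set $B\subset\RR^d\setminus\{0\}$ with $\zeta(\partial B)=0$. A set $B$ that is relatively compact in $\RR^d\setminus\{0\}$ is bounded and bounded away from the origin, so one can fix $r\in(0,1)$ with $B\subset(\ball_r^d)^c$; then $\Xi_{t,R}(B)=(\Xi_{t,R}\mres{(\ball_r^d)^c})(B)$ and $\zeta(B)=(\zeta\mres{(\ball_r^d)^c})(B)$, so it is enough to establish the convergence for the \emph{restricted} processes, to which Theorem \ref{convKR} applies once $t$ is large enough to guarantee $R=t^{-d/(d+1)}<r$.

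First I would fix such a set $B$ and a corresponding radius $r$, and record that $\zeta$ is almost surely simple and locally finite on $\RR^d\setminus\{0\}$: its intensity measure $M$ is absolutely continuous, hence diffuse, and finite on every compact subset of $\RR^d\setminus\{0\}$, where the density $\|x\|^{-(d+1)}$ is bounded. By Theorem \ref{convKR} we have $d_{\mathrm{KR}}(\Xi_{t,R}\mres{(\ball_r^d)^c},\zeta\mres{(\ball_r^d)^c})\to0$, so for every large $t$ one may pick a near-optimal coupling $(\widehat\Xi_t,\widehat\zeta_t)$ of the two restricted processes, i.e. $\widehat\Xi_t\overset{d}{=}\Xi_{t,R}\mres{(\ball_r^d)^c}$, $\widehat\zeta_t\overset{d}{=}\zeta\mres{(\ball_r^d)^c}$ and $\EE\,d_{\mathrm{TV}}(\widehat\Xi_t,\widehat\zeta_t)\to0$. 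Since $d_{\mathrm{TV}}$ is integer-valued, Markov's inequality yields $\PP(\widehat\Xi_t\neq\widehat\zeta_t)=\PP(d_{\mathrm{TV}}(\widehat\Xi_t,\widehat\zeta_t)\geq1)\to0$, and on the complementary event the two configurations coincide, hence agree on $B$. Thus $\widehat\Xi_t(B)-\widehat\zeta_t(B)\to0$ in probability; as $\widehat\zeta_t(B)$ has the ($t$-independent) law of $\zeta(B)$, Slutsky's lemma gives $\widehat\Xi_t(B)\xrightarrow{d}\zeta(B)$, and therefore $\Xi_{t,R}(B)\xrightarrow{d}\zeta(B)$ because $\widehat\Xi_t(B)\overset{d}{=}\Xi_{t,R}(B)$.

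Since $B$ was arbitrary, Kallenberg's criterion then gives $\Xi_{t,R}\xrightarrow{d}\zeta$ on the space $\sfN$ endowed with the vague topology, which is the claim. I do not anticipate a genuine obstacle in this argument: all the analytic substance is already contained in Theorem \ref{convKR} (and, through it, in Lemmas \ref{lem:intersection}, \ref{density}, \ref{errorfunction} and Corollary \ref{totalvariation}). The only two points requiring a little care are (i) checking that relative compactness in $\RR^d\setminus\{0\}$ forces a test set away from a neighbourhood of the origin, so that passing to the restriction to $(\ball_r^d)^c$ loses no information, and (ii) the soft, standard fact — made explicit in the coupling step above — that vanishing of the Kantorovich–Rubinstein distance of \cite{decreusefond2016functional} propagates to convergence in distribution of the linear functionals $\mu\mapsto\mu(B)$.
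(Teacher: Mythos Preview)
Your proof is correct and follows essentially the same route as the paper: reduce, via Kallenberg's criterion \cite[Theorem 16.16]{kallenberg2002foundation}, to test sets $B$ lying in some $(\ball_r^d)^c$, and then invoke Theorem \ref{convKR}. The only difference is cosmetic: where the paper cites Proposition 2.1 of \cite{decreusefond2016functional} as a black box to pass from $d_{\mathrm{KR}}\to0$ to weak convergence of the restricted processes, you supply the underlying coupling argument directly (integer-valued $d_{\mathrm{TV}}$, Markov, Slutsky), which makes the step slightly more self-contained but is otherwise the same idea.
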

\begin{proof}%[Proof of Theorem \ref{Poissonapproximation}]
    Theorem 16.16 of \cite{kallenberg2002foundation} implies that in the following list of statements, (1) is equivalent to (2) and (3) is equivalent to (4):
    \begin{itemize}
        \item[(1)] $\Xi_{t,R}\mres{(\ball_{r}^d)^c}\xrightarrow{d}\zeta \mres{(\ball_{r}^d)^c}$ ;
        \item[(2)] $[\Xi_{t,R}\mres{(\ball_{r}^d)^c}] (B) \xrightarrow{d} [\zeta \mres{(\ball_{r}^d)^c}] (B) $ for all Borel sets $B\subset (\ball_r^d)^c$, relatively compact (in the space $(\ball_r^d)^c$) and  with boundary of zero Lebesgue measure.
        \item[(3)] $\Xi_{t,R} \xrightarrow{d} \zeta $ ;
        \item[(4)] $\Xi_{t,R} (B) \xrightarrow{d} \zeta (B) $ for all Borel sets $B\subset \RR^d \setminus \{0\} $, relatively compact (in the space $\RR^d \setminus \{0\} $) and with boundary of zero Lebesgue measure.
    \end{itemize}
    We are going to show that (1) holds and that (4) is equivalent to (2) (for all $r>0$).
    
    Let $r>0$ be any fixed real number. 
    Using Theorem \ref{convKR} we have that
    \[ \lim_{t \to \infty} d_{\mathrm{KR}}(\Xi_{t,R}\mres{(\ball_{r}^{d})^c} , \zeta\mres{(\ball_{r}^d})^c) = 0 . \]
    This implies (1) by Proposition 2.1 of \cite{decreusefond2016functional}.
    
    Now, consider a set $B$ as in (4). Since it is relatively compact in $\RR^d\setminus\{0\}$ its boundary does not contain the origin and therefore there exists a small radius $r>0$ such that $B\subset (\ball_r^d)^c$. From this we conclude that since (2) holds for all $r$, (4) holds as well. As mentioned above, (4) is equivalent to (3) and therefore the theorem follows.
\end{proof}

\begin{remark} 
    The constant $C_d$ appearing in the statement of Theorem \ref{Poissonapproximation} is  defined in \eqref{eq:DefConstCd}. In other words, $C_d$ is $(2 \omega_{d-1} / \omega_{d})^{d} / d! $ times the expected volume of a random parallelotope spanned by independent random vectors $Y_1,\ldots,Y_d$ which are uniformly distributed on $\sphere^{d-2}\times[-1,1]$ (i.e., according to the normalised $(d-1)$-dimensional Hausdorff measure on that set). In dimension $d=2$ the integral in the definition of $C_2$ can be computed explicitly:
    \[
    C_2= \frac{1}{2} \Bigl( \frac{2\cdot 2}{2\pi} \Bigr)^{2} \int_{-1}^1\int_{-1}^1|x-y|\,\frac{\dint x}{2}\frac{\dint y}{2} 
    %= \textcolor{blue}{\frac{4}{\pi^2}} \frac{2}{3}
    = \frac{4}{3\pi^2} \approx 0.135.
    \]
    In higher dimensions such a computation seems no longer tractable.
\end{remark}

\begin{remark} 
    The result of Theorem \ref{Poissonapproximation} remains true if one constructs the point process $\Xi_{t,R}$ as the intersections of $n=\lfloor t \rfloor$ independent and identically distributed random hyperplanes uniformly distributed in the set of hyperplanes intersecting the ball $\ball_R^d$, that is, if we replace the role of the Poisson hyperplane process $\eta_{t,R}$ by a binomial hyperplane process of same intensity measure.
    The proof would follow exactly the same lines and relies on the fact that the bound from \cite{decreusefond2016functional} on the Kantarovich-Rubinstein distance applies as well when the underlying process is a binomial point process. We restricted ourselves to the Poisson case for simplicity and because the Poisson hyperplane process and its induced tessellation are the more classical objects in stochastic geometry.
    %In order to keep the present manuscript relatively short, we omit this proof.
\end{remark}

\subsection{Convergence of the convex hull}

In the previous section we have established that the intersection point process $\Xi_{t,R}$ converges in distribution to the Poisson point process $\zeta$ with density function $x\mapsto C_d \norm{x}^{-(d+1)}$, as $t\to\infty$ and $R=t^{-\frac{d}{d+1}}$.
Now we will consider the weak convergence of the convex hull together with its $f$-vector. For a polytope $P\subset\RR^d$ and $k\in\{0,1,\ldots,d-1\}$ we write $f_k(P)$ for the number of $k$-dimensional faces of $P$. The vector $f(P):=(f_0(P),f_1(P),\ldots,f_{d-1}(P))$ is called the $f$-vector of $P$. Moreover, we recall the Hausdorff distance
$$
d_H(A,B):=\max\big\{\max_{x\in A}\min_{y\in B}\|x-y\|,\max_{y\in B}\min_{x\in A}\|x-y\|\big\},\qquad A,B\in\cC',
$$
on the space $\cC'$ of non-empty compact subsets of $\RR^d$. Endowing $\cC'$ with the topology generated by the Hausdorff distance and the induced Borel $\sigma$-field, a random non-empty compact set is a measurable mapping from an underlying probability space $(\Omega,\cF,\PP)$ to the measurable space $\cC'$, and convergence in distribution of a sequence of random compact sets $Z_n$ to a random compact set $Z$ refers to weak convergence with respect to this topology, see \cite[Chapter 16]{kallenberg2002foundation} and \cite[Chapter 12.3]{schneider2008stochastic}.

\begin{theorem} \label{thm:hull-Hausdorff}
    Let $R=t^{-\frac{d}{d+1}}$.
    It holds that ${\rm conv}\;\Xi_{t,R}$ converges, as $t\to\infty$, to ${\rm conv}\;\zeta$ in distribution. Moreover, $f_k(\mathrm{conv} \, \Xi_{t,R})$ converges  to $f_k(\mathrm{conv} \;\zeta)$ in distribution for all $k\in\{0,1,\ldots,d-1\}$, as $t\to\infty$.
\end{theorem}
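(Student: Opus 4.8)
The plan is to bootstrap both assertions from the Kantorovich--Rubinstein bound of Theorem~\ref{convKR} by means of the following elementary observation: for a locally finite configuration $\nu$ on $\RR^d\setminus\{0\}$ and $r>0$, if $\ball_r^d\subseteq\mathrm{conv}(\nu\mres{(\ball_r^d)^c})$ then in fact $\mathrm{conv}\,\nu=\mathrm{conv}(\nu\mres{(\ball_r^d)^c})$, since the remaining points of $\nu$ lie in $\ball_r^d$ and hence already in that hull. Thus, on the event where the hull of the ``macroscopic'' part of the configuration swallows $\ball_r^d$, the full convex hull --- and therefore also its $f$-vector --- is a \emph{measurable function of the restriction of the process to $(\ball_r^d)^c$ alone}. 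Since Theorem~\ref{convKR} controls precisely that restriction, this will do the job once we know the relevant event has probability close to one.

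\emph{Step 1: almost sure structure of $\mathrm{conv}\,\zeta$.} First I would show that $\mathrm{conv}\,\zeta$ is almost surely a full-dimensional polytope with $0\in\mathrm{int}(\mathrm{conv}\,\zeta)$. Boundedness is a Borel--Cantelli argument: the $\zeta$-mass of the annulus $\{n\le\|x\|<n+1\}$ is of order $n^{-2}$, hence summable, so almost surely only finitely many annuli are occupied. That the origin is interior follows by fixing $d+1$ cones $C_1,\dots,C_{d+1}$ with apex $0$, narrow enough that one point chosen from each has the origin in the interior of its convex hull (an open, scale-invariant condition on $d+1$ directions); each $C_i\cap\ball_1^d$ carries infinite $\zeta$-intensity, so almost surely meets $\zeta$. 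Since the vertices of $\mathrm{conv}\,\zeta$ are points of $\zeta$ at distance at least the inradius from $0$, and only finitely many points of $\zeta$ lie outside any ball, $\mathrm{conv}\,\zeta$ has finitely many vertices; so it is almost surely a polytope and $f_k(\mathrm{conv}\,\zeta)<\infty$ almost surely. Letting $r_*>0$ be the minimum of the inradius of $\mathrm{conv}\,\zeta$ at the origin and of the smallest norm of a vertex of $\mathrm{conv}\,\zeta$, one checks that for every $r\in(0,r_*)$ one has $\mathrm{conv}(\zeta\mres{(\ball_r^d)^c})=\mathrm{conv}\,\zeta\supseteq\ball_r^d$, and consequently
\[
p(r):=\PP\bigl(\ball_r^d\not\subseteq\mathrm{conv}(\zeta\mres{(\ball_r^d)^c})\bigr)\le\PP(r_*\le r)\xrightarrow[r\downarrow0]{}0 .
\]

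\emph{Step 2: passing to the limit.} Fix $r\in(0,1)$ and a bounded measurable $g$ on $\cC'$. Writing $B_r^{(t)}:=\{\ball_r^d\subseteq\mathrm{conv}(\Xi_{t,R}\mres{(\ball_r^d)^c})\}$ and $B_r^{(\infty)}:=\{\ball_r^d\subseteq\mathrm{conv}(\zeta\mres{(\ball_r^d)^c})\}$, the observation above together with a split according to $B_r^{(t)}$ gives
\[
\EE\,g(\mathrm{conv}\,\Xi_{t,R})=\EE\,G_r\bigl(\Xi_{t,R}\mres{(\ball_r^d)^c}\bigr)+\EE\bigl[g(\mathrm{conv}\,\Xi_{t,R})\,\ind{(B_r^{(t)})^c}\bigr],
\]
where $G_r(\nu):=g(\mathrm{conv}\,\nu)\,\ind{\ball_r^d\subseteq\mathrm{conv}\,\nu}$ is a bounded measurable functional of the restricted configuration, and the analogous identity holds for $\zeta$. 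Since $G_r$, $B_r^{(t)}$ and $B_r^{(\infty)}$ only see the restrictions to $(\ball_r^d)^c$, and the total variation distance between the laws of $\Xi_{t,R}\mres{(\ball_r^d)^c}$ and $\zeta\mres{(\ball_r^d)^c}$ is bounded by $d_{\mathrm{KR}}(\Xi_{t,R}\mres{(\ball_r^d)^c},\zeta\mres{(\ball_r^d)^c})\to0$ by Theorem~\ref{convKR}, one gets both $\EE\,G_r(\Xi_{t,R}\mres{(\ball_r^d)^c})\to\EE\,G_r(\zeta\mres{(\ball_r^d)^c})$ and $\limsup_{t}\PP((B_r^{(t)})^c)\le\PP((B_r^{(\infty)})^c)=p(r)$. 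Altogether
\[
\limsup_{t\to\infty}\bigl|\EE\,g(\mathrm{conv}\,\Xi_{t,R})-\EE\,g(\mathrm{conv}\,\zeta)\bigr|\le2\|g\|_\infty\,p(r),
\]
and letting $r\downarrow0$ and invoking Step~1 makes the right-hand side vanish. This yields $\mathrm{conv}\,\Xi_{t,R}\to\mathrm{conv}\,\zeta$ in distribution (indeed in total variation, since only boundedness of $g$ was used); running the same argument with $g$ replaced by a bounded function of $\nu\mapsto f_k(\mathrm{conv}\,\nu)$ gives the convergence in distribution of each $f_k$.

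\emph{Main obstacle.} The limiting argument of Step~2 is soft bookkeeping; the substantive point is Step~1, namely that $\mathrm{conv}\,\zeta$ is almost surely a polytope with the origin in its interior and with a strictly positive random inradius that eventually dominates the truncation radius $r$. In particular one must exclude that deleting the (infinitely many) points of $\zeta$ near the origin ever removes a vertex or lets the hull retract inside $\ball_r^d$; the quantity $r_*$ is precisely what is engineered to rule this out.
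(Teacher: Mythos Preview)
Your argument is correct and takes a genuinely different route from the paper. The paper's proof invokes the Skorokhod representation theorem to upgrade the distributional convergence $\Xi_{t,R}\xrightarrow{d}\zeta$ (on $\RR^d\setminus\{0\}$, from Theorem~\ref{Poissonapproximation}) to almost sure convergence, and then appeals to an external result, Lemma~4.1 of \cite{kabluchko2019cones}, which says that under the two almost sure properties of $\zeta$ (every open half-space through $0$ is hit; general position) the convex hulls converge in Hausdorff distance and the face numbers converge. Your proof bypasses both devices: you work directly with the Kantorovich--Rubinstein bound of Theorem~\ref{convKR}, observe that $d_{\mathrm{KR}}$ dominates the total variation distance between the \emph{laws} of the restricted processes (since $d_{\mathrm{TV}}(S_1,S_2)\geq\ind{S_1\neq S_2}$), and then use the truncation trick $\mathrm{conv}\,\nu=\mathrm{conv}(\nu\mres{(\ball_r^d)^c})$ on the high-probability event $\{\ball_r^d\subseteq\mathrm{conv}(\nu\mres{(\ball_r^d)^c})\}$ to transfer everything to functionals of the restriction. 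What your approach buys is self-containment (no Skorokhod, no citation to \cite{kabluchko2019cones}) and a strictly stronger conclusion: since your bound is uniform over all bounded measurable $g$ with $\|g\|_\infty\le 1$, you actually obtain convergence of $\mathrm{conv}\,\Xi_{t,R}$ to $\mathrm{conv}\,\zeta$ in total variation, not merely in distribution. What the paper's route buys is brevity: the geometric facts you establish in Step~1 (boundedness, $0$ interior, finitely many vertices, positive inradius $r_*$) are essentially the content of the cited lemma, so the paper outsources that work.
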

\begin{proof}
    First, it follows from \cite[Lemma 3.1.4]{schneider2008stochastic} and \cite[Theorem 12.3.5]{schneider2008stochastic} that $\mathrm{conv} \, \Xi_{t,R}$ (for any $t>0$ and $R>0$) and $\mathrm{conv} \;\zeta$ are almost surely random non-empty compact sets in $\RR^d$.
    Now, let $R=t^{-\frac{d}{d+1}}$ and consider the asymptotics as $t\to \infty$.
    We have already seen that $\Xi_{t,R}\xrightarrow{d}\zeta$.
    Applying the Skorokhod representation theorem \cite[Therorem 4.30]{kallenberg2002foundation} we can find a probability space $(\Omega,\cA,\PP)$ and random elements $\widetilde{\Xi}_{t,R}$ and $\widetilde{\zeta}$ such that $\widetilde{\Xi}_{t,R}$ and ${\Xi}_{t,R}$ and $\widetilde{\zeta}$ and ${\zeta}$ have the same distribution (for any $t>0$), and such that $\widetilde{\Xi}_{t,R}\to\widetilde{\zeta}$ almost surely. 
    For almost all $\omega\in\Omega$ we have that
    \begin{enumerate}
        \item $\widetilde{\zeta}(\omega) \cap H_{+} \neq \emptyset $ for every open half-space $H_{+} \subset \RR^{d}$ such that $0 \in \partial H_{+}$,
        \item $\widetilde{\zeta}(\omega)$ is in general position, that is, no $k+2$ atoms of $\widetilde{\zeta}(\omega)$ belong to the same $k$-dimensional flat, $k\in\{1,\ldots,d-1\}$.
    \end{enumerate}
    These are precisely the assumptions of Lemma 4.1 in \cite{kabluchko2019cones} which implies that, as $t\to\infty$ and for almost all $\omega\in\Omega$,
    \begin{equation*}
        \mathrm{conv} (\widetilde{\Xi}_{t,R}(\omega)) \to \mathrm{conv} (\widetilde{\zeta}(\omega))
    \end{equation*}
    with respect to the Hausdorff distance $d_H$ and, for any $k\in\{0,1,\ldots,d-1\}$,
    \begin{equation*}
        f_k(\mathrm{conv}(\widetilde{\Xi}_{t,R}(\omega)) \to f_k(\mathrm{conv}( \widetilde{\zeta}(\omega)).
    \end{equation*}
    Going back to the original probability space we get the two convergences claimed in the theorem.
\end{proof}

We would like to rephrase the result of Theorem \ref{thm:hull-Hausdorff} in a different way. For this consider the tessellation induced by a stationary and isotropic Poisson hyperplane process in $\RR^d$ of intensity $t>0$. This is a random collection of pairwise non-overlapping random polytopes covering the whole space. With probability one precisely one of these polytopes contains the origin of $\RR^d$ in its interior. This random polytope is denoted by $Z_t$ and called the zero cell of the Poisson hyperplane tessellation of intensity $t$. Let us also recall that for a convex set $K\subset\RR^d$ we denote by $K^\circ:=\{x\in\RR^d\mid\langle x,y\rangle\leq 1\text{ for all }y\in K\}$ the convex dual of $K$. In particular, if $P$ is a polytope containing the origin in its interior,
\begin{equation}\label{eq:fkPolar}
f_k(P)=f_{d-k-1}(P^\circ)    
\end{equation}
for all $k\in\{0,1,\ldots,d-1\}$, see \cite[Corollary 2.13]{Ziegler}.

\begin{corollary}\label{cor:ZeroCell}
    Let $R=t^{-\frac{d}{d+1}}$ and put $\gamma_d:=\frac{1}{2}C_d\omega_d$, where $C_d$ is defined by \eqref{eq:DefConstCd}. Then ${\rm conv}\;\Xi_{t,R}$ converges, as $t\to\infty$, in distribution to $Z_{\gamma_d}^\circ$. Moreover, $f_k({\rm conv}\;\Xi_{t,R})$ converges in distribution to $f_{d-k+1}(Z_{\gamma_d})$ for all $k\in\{0,1,\ldots,d-1\}$, as $t\to\infty$.
\end{corollary}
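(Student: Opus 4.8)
\emph{Proof proposal.} The plan is to realise the limiting process $\zeta$ as the image of a stationary isotropic Poisson hyperplane process under the polarity map, so that $\mathrm{conv}\,\zeta$ becomes the polar body of the associated zero cell, and then to transport everything through Theorem~\ref{thm:hull-Hausdorff}.

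First I would set up the duality dictionary. Write a hyperplane $H\subset\RR^d$ not passing through the origin as $H_{u,\tau}=\{x\in\RR^d\colon\langle x,u\rangle=\tau\}$, where $u\in\sphere^{d-1}$ is its unit normal pointing away from the origin and $\tau=\mathrm{dist}(0,H)>0$. Under this parametrisation the Haar measure $\mu_{d-1}$ equals a constant $c_d$ times the product of the spherical Lebesgue measure $\sigma$ on $\sphere^{d-1}$ with Lebesgue measure on $(0,\infty)$; with the normalisation of \cite{schneider2008stochastic} one has $c_d=2/\omega_d$. Let $T$ denote the injective map $H_{u,\tau}\mapsto u/\tau$ from hyperplanes to $\RR^d\setminus\{0\}$. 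Writing $x=u/\tau$ in polar coordinates $x=\rho w$ with $w=u$ and $\rho=1/\tau$, a routine change of variables shows that $T$ pushes the product measure just mentioned forward to $\norm{x}^{-(d+1)}\dint x$ on $\RR^d\setminus\{0\}$; hence $T$ pushes $\gamma\mu_{d-1}$ forward to $\gamma c_d\,\norm{x}^{-(d+1)}\dint x$. Taking $\gamma=\gamma_d=\tfrac{1}{2} C_d\omega_d$ gives $\gamma_d c_d=C_d$, so the pushforward is exactly the intensity measure of $\zeta$. By the mapping theorem for Poisson processes \cite{last2018lectures} the point process $\Phi:=\{T(H)\colon H\in\eta_{\gamma_d}\}$ is therefore a Poisson point process with the same intensity measure as $\zeta$, i.e. $\Phi\overset{d}{=}\zeta$.

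Next I would identify $\mathrm{conv}\,\Phi$ with $Z_{\gamma_d}^\circ$. Since $\langle 0,u\rangle=0<\tau$, the half-space of $H_{u,\tau}$ that contains the origin is $\{x\colon\langle x,u\rangle\leq\tau\}$, and dividing by $\tau>0$ this equals $\{x\colon\langle x,T(H_{u,\tau})\rangle\leq 1\}$; hence $Z_{\gamma_d}=\bigcap_{H\in\eta_{\gamma_d}}\{x\colon\langle x,T(H)\rangle\leq 1\}$, and the standard formula for the polar of an intersection of half-spaces gives $Z_{\gamma_d}^\circ=\overline{\mathrm{conv}}\bigl(\{0\}\cup\Phi\bigr)$. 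Almost surely no hyperplane of $\eta_{\gamma_d}$ passes through the origin, so $0\in\mathrm{int}\,Z_{\gamma_d}$, and for $d\geq2$ the zero cell of a stationary Poisson hyperplane tessellation is almost surely a bounded polytope \cite{schneider2008stochastic}. Consequently $Z_{\gamma_d}^\circ$ is a.s. a bounded polytope with $0$ in its interior, so each of its finitely many vertices belongs to $\Phi$ and $Z_{\gamma_d}^\circ=\mathrm{conv}\,\Phi$ almost surely. Since $\mathrm{conv}$ is a measurable map on the space of locally finite counting measures on $\RR^d\setminus\{0\}$ (as already used in the proof of Theorem~\ref{thm:hull-Hausdorff}), it follows that $\mathrm{conv}\,\zeta\overset{d}{=}\mathrm{conv}\,\Phi=Z_{\gamma_d}^\circ$, and likewise $f_k(\mathrm{conv}\,\zeta)\overset{d}{=}f_k(Z_{\gamma_d}^\circ)$ for every $k\in\{0,\ldots,d-1\}$.

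Finally I would combine the pieces. Theorem~\ref{thm:hull-Hausdorff} gives $\mathrm{conv}\,\Xi_{t,R}\xrightarrow{d}\mathrm{conv}\,\zeta$ and $f_k(\mathrm{conv}\,\Xi_{t,R})\xrightarrow{d}f_k(\mathrm{conv}\,\zeta)$ as $t\to\infty$; combined with the distributional identities of the previous step and with the polarity relation \eqref{eq:fkPolar} applied to $P=Z_{\gamma_d}^\circ$ together with the bipolar identity $(Z_{\gamma_d}^\circ)^\circ=Z_{\gamma_d}$, this yields $\mathrm{conv}\,\Xi_{t,R}\xrightarrow{d}Z_{\gamma_d}^\circ$ and $f_k(\mathrm{conv}\,\Xi_{t,R})\xrightarrow{d}f_{d-1-k}(Z_{\gamma_d})$ for all $k\in\{0,\ldots,d-1\}$. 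The only step requiring genuine care is pinning down the normalising constant so that the pushforward intensity is \emph{exactly} $C_d\norm{x}^{-(d+1)}$, i.e. checking $\gamma_d=\tfrac{1}{2} C_d\omega_d$ against the precise normalisation of $\mu_{d-1}$ in \cite{schneider2008stochastic}; a secondary point is to make sure that $\mathrm{conv}\,\zeta$ is a.s. a genuine bounded polytope even though $\zeta$ is infinite, which is however guaranteed by the identification with $Z_{\gamma_d}^\circ$ (equivalently, by the almost sure properties of $\zeta$ recorded in the proof of Theorem~\ref{thm:hull-Hausdorff}, namely general position and meeting every half-space whose boundary passes through $0$).
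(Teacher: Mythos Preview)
Your proof is correct and follows the same logical skeleton as the paper's: invoke Theorem~\ref{thm:hull-Hausdorff}, identify $\mathrm{conv}\,\zeta$ with $Z_{\gamma_d}^\circ$, and then read off the face-number statement via the polarity relation~\eqref{eq:fkPolar}. The only difference is that the paper obtains the key distributional identity $\mathrm{conv}\,\zeta\overset{d}{=}Z_{\gamma_d}^\circ$ by a black-box citation (\cite[Theorem~1.23]{kabluchko2018beta}), whereas you supply a self-contained argument: push $\eta_{\gamma_d}$ through the inversion map $H_{u,\tau}\mapsto u/\tau$, check via the change of variables $\rho=1/\tau$ that the pushforward intensity is exactly $C_d\|x\|^{-(d+1)}\,\dint x$ (using $c_d=2/\omega_d$ so that $\gamma_d c_d=C_d$), apply the mapping theorem, and then identify the zero cell as the polar of the convex hull. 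This buys you a proof that is independent of the cited reference and makes transparent why the constant $\gamma_d=\tfrac12 C_d\omega_d$ is the right one, at the cost of a few extra lines.

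One remark: your computation yields $f_k(\mathrm{conv}\,\Xi_{t,R})\xrightarrow{d} f_{d-1-k}(Z_{\gamma_d})$, which is what \eqref{eq:fkPolar} actually gives; the index $d-k+1$ in the corollary's statement is a typo in the paper, not an error on your part.
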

\begin{proof}
    The first part is a direct consequence of Theorem \ref{thm:hull-Hausdorff} and the fact (illustrated by Figure \ref{fig:dual}) that the random polytopes $\mathrm{conv} \;\zeta$ and $Z_{\gamma_d}^\circ$ are identically distributed, see \cite[Theorem 1.23]{kabluchko2018beta}. The second claim also follows from Theorem \ref{thm:hull-Hausdorff} together with \eqref{eq:fkPolar}.
\end{proof}

\begin{figure} 
    \centering
    \includegraphics[width=\linewidth]{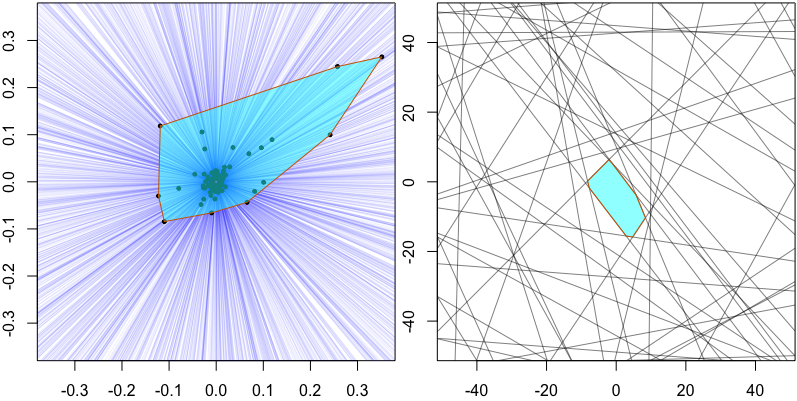}
    \caption{Left panel: One realisation of $\Xi_{t,R}$ with $t=125\,000\,000$ and $R=t^{-2/3}$ and its convex hull (cyan polygon). Right panel: Convex dual of the convex hull of $\Xi_{t,R}$ based on the same realisation (cyan polygon); the lines are dual to the intersection points in the left panel.}
    \label{fig:dual}
\end{figure}

\begin{remark} \label{remZeroTyp}
    By means of the last corollary, properties on the asymptotic distribution of $\mathrm{conv} \, \Xi_{t,R}$ can be derived by duality from statements on the distribution of the zero cell $Z_{\gamma_d}$.
    This random polytope, as well as its volume-debiased version -- known as typical cell~--, has been intensively investigated since the pioneering work of Miles and Matheron in the 1960'ies and 1970'ies, see \cite[Chapter 10.4]{schneider2008stochastic} and the many references cited therein. More recent results include formulas for the expected face numbers of the zero cell in any dimension \cite{kabluchko2019expected,kabluchko2018beta}, a description of
    `large' \cite{bonnet2018cells,HRTypicalPoisson,HRSZeroPoisson} or `small' \cite{bonnet2018small,schneider2019small} typical and zero cells,
    %concentration inequalities for the face numbers and intrinsic volumes of the zero cell \cite{GroteThaele}
    super-exponential bounds on the probability of having $n$ facets for large $n$ \cite{bonnet2018cells} and a probabilistic analysis of zero cell in high dimensions \cite{ElizaShinShell,HH,HHRT}.
\end{remark}

Based Theorem \ref{thm:hull-Hausdorff} we are now in the position to disprove the conjecture from \cite{devroye1993convex} we discussed in the introduction. In fact, Corollary \ref{cor:FaceNumbers} yields a lower bound on limit of all expected face number for any space dimension $d\geq 2$.

\begin{corollary}\label{cor:FaceNumbers}
    Let $R=t^{-\frac{d}{d+1}}$ and fix $k\in\{0,1,\ldots,d-1\}$. Then
    $$
    \liminf_{t\to\infty}\EE f_k(\mathrm{conv} \, \Xi_{t,R}) \geq \EE f_k(\mathrm{conv} \;\zeta).
    $$
    In particular, if $d=2$,
    $$
    \liminf_{t\to\infty}\EE f_0(\mathrm{conv} \, \Xi_{t,R}) \geq \EE f_0(\mathrm{conv} \;\zeta) = \frac{\pi^2}{2}.
    $$
\end{corollary}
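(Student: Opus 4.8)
The plan is to read off the inequality directly from the weak convergence of face numbers proved in Theorem~\ref{thm:hull-Hausdorff}, exploiting only that these quantities are non-negative, and then, for $d=2$, to evaluate the limiting expectation by reducing it through Corollary~\ref{cor:ZeroCell} and the polarity relation \eqref{eq:fkPolar} to the classical expected vertex number of the zero cell of a Poisson line tessellation.

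First I would fix $k\in\{0,1,\ldots,d-1\}$ and recall from Theorem~\ref{thm:hull-Hausdorff} that $f_k(\mathrm{conv}\,\Xi_{t,R})\xrightarrow{d}f_k(\mathrm{conv}\,\zeta)$ as $t\to\infty$. All random variables involved are $\NN$-valued, hence non-negative, so for every truncation level $M>0$ the bounded continuous function $x\mapsto\min(x,M)$ satisfies
\[
\EE\min\bigl(f_k(\mathrm{conv}\,\zeta),M\bigr)=\lim_{t\to\infty}\EE\min\bigl(f_k(\mathrm{conv}\,\Xi_{t,R}),M\bigr)\leq\liminf_{t\to\infty}\EE f_k(\mathrm{conv}\,\Xi_{t,R}),
\]
and letting $M\to\infty$ and invoking monotone convergence on the left gives the asserted bound $\liminf_{t\to\infty}\EE f_k(\mathrm{conv}\,\Xi_{t,R})\geq\EE f_k(\mathrm{conv}\,\zeta)$. (Equivalently, one may combine the Skorokhod representation theorem with Fatou's lemma.)

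Next I would treat the case $d=2$ and compute $\EE f_0(\mathrm{conv}\,\zeta)$. By Corollary~\ref{cor:ZeroCell} the random polygon $\mathrm{conv}\,\zeta$ has the same distribution as $Z_{\gamma_2}^\circ$, the polar of the zero cell of a stationary isotropic Poisson line tessellation of intensity $\gamma_2=\tfrac{1}{2}C_2\omega_2$. Applying \eqref{eq:fkPolar} with $d=2$ and $k=0$ gives $f_0(Z_{\gamma_2}^\circ)=f_1(Z_{\gamma_2})$, and since a planar polytope almost surely has as many edges as vertices, $f_1(Z_{\gamma_2})=f_0(Z_{\gamma_2})$. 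The expected number of vertices of the zero (Crofton) cell of a stationary isotropic Poisson line process is the classical value $\pi^2/2$, independent of the intensity; see \cite[Chapter 10.4]{schneider2008stochastic} and the references therein. Plugging $\EE f_0(\mathrm{conv}\,\zeta)=\pi^2/2\approx4.93$ into the inequality from the previous step completes the proof, and since $\pi^2/2>4$ this contradicts the limiting vertex number conjectured in \cite{devroye1993convex}.

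The one delicate point I would flag is the one-sidedness of the argument: weak convergence does not propagate to expectations, so the honest conclusion is the $\liminf$ inequality rather than an exact limit — passing to a limit would require uniform integrability of the family $\{f_k(\mathrm{conv}\,\Xi_{t,R})\}_t$, which is not available (and should in fact fail, the face numbers of these polytopes being heavy-tailed). The remaining ingredients — the truncation/Fatou step and the evaluation of the Crofton cell's expected vertex count — are routine.
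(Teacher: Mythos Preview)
Your proof is correct and follows essentially the same approach as the paper: the $\liminf$ inequality is obtained from the distributional convergence of face numbers in Theorem~\ref{thm:hull-Hausdorff} via a Fatou-type argument (the paper invokes Fatou's lemma directly after the Skorokhod coupling, which is exactly the alternative you mention), and the planar value $\pi^2/2$ is identified with the expected vertex number of the zero cell. The only cosmetic difference is the reference used for $\EE f_0(\mathrm{conv}\,\zeta)=\pi^2/2$: the paper cites \cite[Corollary~2.13]{kabluchko2019cones} directly, whereas you route through Corollary~\ref{cor:ZeroCell} and the classical Crofton-cell formula; both are valid. Your closing caveat that uniform integrability ``should in fact fail'' is stronger than what the paper claims---it only says this seems a challenging open point---so you may wish to soften that remark.
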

\begin{proof}
    The first claim is a direct consequence of Fatou's lemma together with the fact that all the random variables $f_k(\mathrm{conv} \, \Xi_{t,R})$, $t>0$, and $f_k(\mathrm{conv} \;\zeta)$ are non-negative. In dimension $2$, we have that $f_0(P)=f_1(P)$ for any polygon $P\subset\RR^2$ and so the identity $\EE f_0(\mathrm{conv} \;\zeta) = \frac{\pi^2}{2}$ follows from \cite[Corollary 2.13]{kabluchko2019cones} by taking $\gamma=1$ there.
\end{proof}

We would like to emphasize that the expected face numbers $\EE f_k(\mathrm{conv} \;\zeta)$ of the convex hull of the Poisson point process $\zeta$ have been computed explicitly for any $d\geq 2$ and $k\in\{0,1,\ldots,d-1\}$ in \cite[Theorem 2.1]{kabluchko2019angles} and are given by
\[ \EE f_k(\mathrm{conv} \;\zeta) = \binom{d}{k+1} \pi^{d-1}\int_{-\infty}^{\infty}(\cosh u)^{-(d+1)}\Big(\frac{1}{2}+\frac{\mathfrak{i}u}{\pi}\Big)^{d-k-1}\,\dint u, \]
where $\mathfrak{i}$ is the imaginary unit. For example, for $d=2$ this leads to
$$
\EE f_0(\mathrm{conv} \;\zeta) = \EE f_1(\mathrm{conv} \;\zeta) = \frac{\pi^2}{2}
$$
as in Corollary \ref{cor:FaceNumbers} and for $d=3$ one has the values
$$
\EE f_0(\mathrm{conv} \;\zeta) = \frac{2}{3}(\pi^2+3),\qquad \EE f_1(\mathrm{conv} \;\zeta) = 2\pi^2,\qquad \EE f_2(\mathrm{conv} \;\zeta) = \frac{4}{3}\pi^2.
$$

\begin{remark}
    In analogy with \cite[Theorem 2.4]{kabluchko2019cones}, which deals with the convergence of the expected face numbers (and higher moments) of random cones generated by an i.i.d.\ sample, we conjecture that Corollary \ref{cor:FaceNumbers} can be strengthened to the statement that $\EE f_k^m(\mathrm{conv} \, \Xi_{t,R})\to \EE f_k^m(\mathrm{conv} \;\zeta)$, as $t\to\infty$, for any $k\in\{0,1,\ldots, d-1\}$ and $m\in\NN$. However, it should be noted that, although this is tempting, this does formally not follow from Theorem \ref{Poissonapproximation}. To prove the missing uniform integrability of the family of random variables $f_k^m(\mathrm{conv} \, \Xi_{t,R})$ seems a challenging task in view of the intricate correlation structure of the intersection point process $\Xi_{t,R}$. 
\end{remark}

\subsection*{Acknowledgement}
AB and GB were supported by the German Research Foundation (DFG) via GRK 2131 \textit{``High-Dimensional Phenomena in Probability -- Fluctuations and Discontinuity''}.

%%%%%%%%%%%%%%%%%%%
\printbibliography
%%%%%%%%%%%%%%%%%%%

\end{document}